\newtheorem{theorem}{Theorem}
\newtheorem{lemma}[theorem]{Lemma}
\newtheorem{proposition}[theorem]{Proposition}
\newtheorem{corollary}[theorem]{Corollary}
\newtheorem{conjecture}[theorem]{Conjecture}
\theoremstyle{definition}
\newtheorem{definition}[theorem]{Definition}
\newtheorem{remark}[theorem]{Remark}
\newtheorem{question}[theorem]{Question}
\begin{document}

\author[Kate Juschenko]{Kate Juschenko}

\address{\tt EPFL, 1015 Lausanne Switzerland}
  \email{\tt kate.juschenko@gmail.com}

\author[Tatiana Nagnibeda]{Tatiana Nagnibeda}

\address{\tt
Section de math\'ematiques,
Universit\'e de Gen\`eve,
2-4, rue du Li\`evre c.p. 64,
1211 Gen\`eve, Switzerland}
\email{\tt tatiana.smirnova-nagnibeda@unige.ch}

\thanks{The authors acknowledge support of the Swiss National
  Foundation for Scientific Research and of the Mittag-Leffler Insitute}

\title[Spectral radius and percolation constants]{Small spectral radius and percolation constants on non-amenable Cayley graphs}


\begin{abstract}
Motivated by the Benjamini-Schramm non-unicity of percolation conjecture we study the following question. For a given finitely generated non-amenable group 
$\Gamma$,  does there exist a generating set $S$ such that the Cayley graph $(\Gamma,S)$, without loops and multiple edges, has non-unique percolation, i.e., $p_c(\Gamma,S)<p_u(\Gamma,S)$? We show that this is true if $\Gamma$ contains  an infinite normal subgroup $N$  such  that $\Gamma/ N$ is non-amenable. Moreover for any finitely generated group $G$ containing $\Gamma$ there exists a generating set $S'$ of $G$ such that $p_c(G,S')<p_u(G,S')$. In particular this applies to  free Burnside groups $B(n,p)$ with $n \geq 2, p \geq 665$.
We also explore how various non-amenability numerics, such as the isoperimetric constant and the spectral radius, behave on various growing generating sets in the group.
\end{abstract}

\keywords{Non-amenable group, Cayley graph, spectral radius, Bernoulli percolation, isoperimetric constant}
\date{}

\maketitle

\section{Introduction}
Let $\Gamma$ be an infinite finitely generated group and let $S$ be a finite symmetric generating set in $\Gamma$. 
The {\it isoperimetric constant} (or the {\it Cheeger constant})  of  $\Gamma$ with respect to $S$ is 
$$\phi(\Gamma, S)=\inf\limits_{F\subset \Gamma}\frac{\Sigma_{s\in S} |sF\backslash F|}{|F|},$$
where the infimum is taken over all finite subsets $F$ of $\Gamma$. 
Equivalently we can write 
$$\phi(\Gamma, S)=\inf\limits_{F\subset \Gamma}\frac{|\partial_E F|}{|F|},$$
where the infimum is taken over all finite subsets $F$ of $\Gamma$ and where $\partial_E F$ denotes the boundary of  $F$, i.e., the set of all edges connecting $F$ to its complement in the Cayley graph of $\Gamma$ with respect to $S$. 


The isoperimetric constant $\phi(\Gamma, S)$ normalized by the size of the generating set is often called the  {\it conductance} constant of $\Gamma$ with respect to $S$:
$$h(\Gamma, S)=\frac{1}{|S|}\phi(\Gamma, S).$$

Note that we can also define a different isoperimetric constant $\phi_V(\Gamma,S)$ by considering the boundary $\partial_V$ understood as the set of vertices at distance $1$ from $F$. We have 
$$\phi_V(\Gamma,S)\leq \phi(\Gamma, S)\leq |S|\phi_V(\Gamma, S) .$$

Let $\lambda:\Gamma\rightarrow B(l_2\Gamma)$ be the left-regular representation of $\Gamma$. The {\it spectral radius} of $\Gamma$ with respect to $S$ is 
$$\rho(\Gamma, S)=\frac{1}{|S|}||\Sigma_{g\in S}\lambda(g)||.$$
Then from \cite{Kesten} and \cite{Fol} we have the following characterization of amenability:

\begin{enumerate}
\item $\Gamma$ is amenable,
\item $\rho(\Gamma, S)=1$, for some (iff for every) finite generating set $S\subseteq \Gamma$,
\item $\phi(\Gamma,S)=\phi_V(\Gamma,S)=h(\Gamma, S)=0$, for some (iff for every) finite generating set $S\subseteq \Gamma$.
\end{enumerate}

Note that the conductance constant and the spectral radius are connected by the following inequalities (see \cite{Mohar}):
\begin{equation}
\frac{|S|(1-\rho(\Gamma,S))}{|S|-1}\leq h(\Gamma,S)\leq \sqrt{1-\rho(\Gamma,S)^2}.
\end{equation}
Equivalently 
\begin{equation}
1-\frac{h(\Gamma,S)(|S|-1)}{|S|}\leq \rho(\Gamma,S)\leq \sqrt{1-h(\Gamma,S)^2}.
\end{equation}

For $0\leq p\leq 1$ consider the {\it Bernoulli bond percolation} on the Cayley graph $(\Gamma, S)$. Namely, an edge $(g,sg)$ is open with probability $p$ and closed with probability $1-p$; connected components of the subgraph spanned by the open edges are called {\it open clusters}. By Kolmogorov's $0-1$ law, the probability of existence of an infinite open cluster is either $0$ or $1$, and there exists a {\it critical value} $p_c$ -- the smallest such that an infinite cluster exists almost surely for all $p>p_c$. The critical value can also be characterized with the help of the percolation function $\theta(p)$ defined as the probability that the origin $e\in \Gamma$ belongs to an infinite open cluster: 
$$p_c(\Gamma, S)=\sup \{p:\theta(p)=0\} .$$ A second critical value can be defined via $\xi(p)$, the probability that there exists exactly one infinite open cluster (\cite{HP}): 
$$p_u(\Gamma,S)=\inf \{p: \xi(p)=1\} .$$ In general, $$0<p_c\leq p_u\leq 1 .$$
In the amenable case, the two critical values coincide (\cite{BK}), and Benjamini and Schramm conjectured that  this property characterizes amenability. 

The whole setup and the conjecture can in fact be formulated in the more generall situation of locally finite infinite quasi-transitive graphs. In \cite{BS2}, the conjecture was proved for planar graphs. It was also shown that if $G$ is a locally finite quasi-transitive graph, then there exists a constant $k=k(G)$ such that the conjecture holds for the product of $G$ with a regular tree of degree higher than $k$. In this paper we will only work with Cayley graphs, and in this case the {\it Benjamini-Schramm non-unicity of percolation conjecture} says the following.

\begin{conjecture}[\cite{BS}]
If $\Gamma$ is a non-amenable group generated by a finite set $S$ then
$$p_c(\Gamma,S)<p_u(\Gamma,S).$$
\end{conjecture}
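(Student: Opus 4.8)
The only robust, quantitative consequence of the non-amenability hypothesis at our disposal is the strict inequality $\rho(\Gamma,S)<1$, equivalently the positivity of the conductance constant $h(\Gamma,S)>0$ from the Kesten--F\o lner characterization, together with the two-sided bounds (1)--(2) tying $h$ and $\rho$. Accordingly, the plan is to separate the two thresholds by a \emph{spectral wedge}: to produce an upper bound $p_c(\Gamma,S)\le U\bigl(\rho(\Gamma,S),|S|\bigr)$ and a lower bound $p_u(\Gamma,S)\ge L\bigl(\rho(\Gamma,S),|S|\bigr)$ in which both controlling functions depend only on the spectral radius and the degree, and then to argue $U<L$. Any approach to the full statement for \emph{every} $S$ must ultimately extract such a quantitative gap from non-amenability alone, since that is the sole hypothesis.

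For the upper bound on $p_c$ I would run a first/second-moment comparison with a branching process: the expected number of open self-avoiding paths of length $n$ from the origin is governed by the growth of walks in the Cayley graph, which is in turn controlled by $\rho$ (equivalently by $h$ via (1)--(2)). Pushing a second-moment argument at $p$ slightly above the reciprocal branching rate forces an infinite cluster, giving $p_c$ close to the tree value $1/(|S|-1)$ whenever the graph is spectrally tree-like, i.e. $\rho$ small. For the lower bound on $p_u$ I would exploit the structure of the uniqueness phase: uniqueness at level $p$ prevents the truncated two-point connectivity function from decaying too rapidly, whereas for small $p$ the connectivity is dominated by a subcritical object whose decay rate is forced by $\rho$ through a BK/tree-graph comparison. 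The resulting conflict pushes $p_u$ away from $p_c$, and again most effectively when $\rho$ is small; this is exactly the regime in which enlarging $S$ to drive $\rho$ toward $0$ is productive.

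The main obstacle --- and the reason the statement remains a genuine conjecture rather than a theorem --- is that this spectral wedge only opens when $\rho(\Gamma,S)$ is bounded away from $1$. For an \emph{arbitrary} finite generating set the value $\rho(\Gamma,S)$ can be arbitrarily close to $1$: a group may be non-amenable yet spectrally almost amenable with respect to some $S$, and then the functions $U$ and $L$ collapse onto one another and the comparison yields nothing. Worse, there is no soft monotonicity allowing one to transfer non-uniqueness established for a favourable generating set to an unfavourable one: $p_c$ and $p_u$ are not comparable in any simple way across different generating sets of the same group, and non-uniqueness is not stable under the rough isometries that relate distinct $S$. Thus the spectral method, strong enough to prove $p_c<p_u$ for \emph{some} generating set of every non-amenable group, does not by itself reach \emph{every} $S$. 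Closing that gap --- either through a generating-set-independent lower bound on $p_u-p_c$ driven directly by $h(\Gamma,S)>0$, or through a monotonicity/stability principle across generating sets --- is where I expect genuinely new ideas to be required, and the conclusions of this paper should be read as the strongest statements the spectral approach currently delivers: non-uniqueness for a rich family of generating sets, and for all finite generating sets of groups whose subgroup structure supplies the needed spectral control.
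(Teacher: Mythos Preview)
The statement you were asked to prove is labeled in the paper as a \emph{Conjecture} (the Benjamini--Schramm non-unicity of percolation conjecture), and the paper offers no proof of it; it is cited from \cite{BS} as an open problem, and the rest of the paper establishes only partial results toward it (Theorem~\ref{spec}, Corollaries~\ref{G/N}--\ref{GtimesZ}, Corollary~\ref{<1}). So there is no ``paper's own proof'' to compare against.

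Your proposal is not a proof either, and you say so explicitly: you outline the spectral-wedge strategy (upper-bound $p_c$ via branching/second-moment, lower-bound $p_u$ via decay of connectivities, both controlled by $\rho$), and then correctly identify the obstruction --- the wedge only opens when $\rho(\Gamma,S)$ is bounded well away from $1$, and non-amenability alone does not force this for an arbitrary $S$. This diagnosis matches the paper's own perspective: the sufficient condition $\rho(\Gamma,S)p_c(\Gamma,S)|S|<1$ of Theorem~\ref{three} and its corollary Proposition~\ref{radius} ($\rho<1/2$) are exactly the quantitative spectral inputs that yield $p_c<p_u$, and the paper's contribution is to manufacture generating sets on which these inputs hold, not to handle every $S$. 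Your closing remark that ``genuinely new ideas'' are required beyond the spectral approach is precisely the status the paper leaves the conjecture in. In short, your write-up is an accurate account of why the statement remains open, not a proof, and the paper agrees.
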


The only  non-amenable groups where the non-unicity of percolation conjecture is proved for all generating sets are groups with cost greater than $1$, as shown by R.Lyons (\cite{Ly00} see also \cite{Ly12}). For the approach to the Benjamini-Schramm conjecture through the theory of measurable equivalence relations see Gaboriau's paper \cite{Gb}.

Benjamini and Schramm showed that the critical value $p_c$ satisfies the following inequality (\cite{BS}):
$$p_c(\Gamma,S)\leq \frac{1}{\phi(\Gamma,S)+1}.$$
They also proved the following sufficient condition.

\begin{theorem}[\cite{BS}] \label{three} If 
\begin{equation}
\rho(\Gamma,S)p_c(\Gamma,S) |S|\ <\ 1,
\end{equation}
then  $p_c(\Gamma,S) < p_u(\Gamma,S)$. 
\end{theorem}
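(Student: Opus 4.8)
The plan is to exhibit a parameter $p>p_c(\Gamma,S)$ for which the infinite open cluster is not unique; this yields $p\le p_u(\Gamma,S)$ and hence $p_c(\Gamma,S)<p_u(\Gamma,S)$. Let $A=\sum_{s\in S}\lambda(s)$ be the adjacency operator of the Cayley graph, so $\|A\|=\rho(\Gamma,S)\,|S|$. The hypothesis is exactly $p_c(\Gamma,S)<\|A\|^{-1}$, so I would first fix some $p$ in the nonempty interval $\bigl(p_c(\Gamma,S),\,\|A\|^{-1}\bigr)$. For such $p$ one has $\theta(p)>0$ straight from $p_c(\Gamma,S)=\sup\{q:\theta(q)=0\}$, so there is almost surely at least one infinite open cluster.

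The key step is to show that the two-point function $\tau_p(x,y)=\mathbb{P}_p(x\leftrightarrow y)$ decays to $0$ as $d(x,y)\to\infty$. I would use a first-moment bound over self-avoiding paths: the event $\{x\leftrightarrow y\}$ is the union, over self-avoiding paths $\gamma$ from $x$ to $y$, of the events $\{\gamma\text{ open}\}$, and a path of length $n$ is open with probability $p^{n}$ since its edges are distinct, so
\[
\tau_p(x,y)\ \le\ \sum_{n\ge 0}p^{n}\,\#\{\text{self-avoiding paths of length }n\text{ from }x\text{ to }y\}\ \le\ \sum_{n\ge 0}p^{n}\,\langle A^{n}\delta_x,\delta_y\rangle ,
\]
the last inequality because $\langle A^{n}\delta_x,\delta_y\rangle$ counts all length-$n$ walks from $x$ to $y$. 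Since $\|pA\|=p\|A\|<1$, the Neumann series $\sum_{n\ge 0}(pA)^{n}$ converges in operator norm to $(I-pA)^{-1}$, so $\tau_p(x,y)\le\langle(I-pA)^{-1}\delta_x,\delta_y\rangle$; and as $(I-pA)^{-1}\delta_x\in\ell^{2}(\Gamma)$, its coefficients tend to $0$, giving $\tau_p(x,y)\to 0$.

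To conclude I would argue by contradiction: if at level $p$ there were almost surely a unique infinite cluster, then for all $x,y$ the increasing events $\{x\leftrightarrow\infty\}$ and $\{y\leftrightarrow\infty\}$ would jointly force $\{x\leftrightarrow y\}$, so Harris--FKG would give $\tau_p(x,y)\ge\theta(p)^{2}>0$ uniformly in $x,y$, contradicting the decay above. Hence $\xi(p)<1$ (the number of infinite clusters being an almost sure constant, necessarily $\ge 2$ by ergodicity of the Bernoulli measure since $\theta(p)>0$), and by the H\"aggstr\"om--Peres monotonicity of uniqueness \cite{HP} this forces $p\le p_u(\Gamma,S)$; together with $p>p_c(\Gamma,S)$ this proves the theorem. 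I expect the main obstacle to be the decay estimate for $\tau_p$ --- this is exactly where the spectral radius hypothesis enters, through $\|pA\|<1$ --- while the remaining ingredients (Harris--FKG, ergodicity, and monotonicity of the uniqueness phase) are standard and only need to be invoked with care, in particular to pass from ``non-uniqueness at $p$'' to the inequality $p\le p_u(\Gamma,S)$.
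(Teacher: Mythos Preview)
The paper does not prove this theorem; it is quoted from Benjamini--Schramm \cite{BS} as a known tool, with no argument supplied. Your proof is correct and is essentially the original Benjamini--Schramm argument: bound the two-point function $\tau_p(x,y)$ by a first-moment sum over self-avoiding paths, dominate that by $\langle (I-pA)^{-1}\delta_x,\delta_y\rangle$ using $\|pA\|=p\,\rho(\Gamma,S)\,|S|<1$, deduce $\tau_p(x,y)\to 0$ from $(I-pA)^{-1}\delta_x\in\ell^2(\Gamma)$, and contrast this with the Harris--FKG lower bound $\tau_p(x,y)\ge\theta(p)^2>0$ that almost-sure uniqueness would force. Invoking the H\"aggstr\"om--Peres monotonicity of uniqueness to pass from non-uniqueness at one $p>p_c$ to $p\le p_u$ is the right way to close the argument; note, incidentally, that your estimate actually works for every $p$ in the interval $(p_c,\|A\|^{-1})$, which gives the slightly sharper conclusion $p_u\ge (\rho(\Gamma,S)\,|S|)^{-1}$.
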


Theorem \ref{three} can be used to obtain further sufficient conditions  for $p_c(\Gamma,S) < p_u(\Gamma,S)$, as for example if combined with the estimate on $p_c(\Gamma,S)$ in terms of $\rho(\Gamma,S)$, $|S|$ and the girth of the graph, obtained in \cite{BNP}. Another corollary of (3) is

\begin{proposition}[\cite{PS}]\label{radius}
If $\rho(\Gamma, S)<\frac{1}{2}$, then $p_c(\Gamma,S)<p_u(\Gamma,S)$.
\end{proposition}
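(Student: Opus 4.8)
The plan is to derive Proposition~\ref{radius} directly from Theorem~\ref{three}, so the entire task reduces to producing an upper bound on $p_c(\Gamma,S)$ good enough to make the product $\rho(\Gamma,S)\,p_c(\Gamma,S)\,|S|$ strictly less than $1$ whenever $\rho(\Gamma,S)<\tfrac12$. First I would recall the Benjamini--Schramm estimate quoted just above the statement, namely $p_c(\Gamma,S)\le \tfrac{1}{\phi(\Gamma,S)+1}$, and combine it with the left-hand Mohar inequality $\phi(\Gamma,S)=|S|\,h(\Gamma,S)\ge |S|\cdot\tfrac{|S|(1-\rho(\Gamma,S))}{|S|(|S|-1)}=\tfrac{|S|(1-\rho(\Gamma,S))}{|S|-1}$. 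Plugging this in gives
\[
p_c(\Gamma,S)\ \le\ \frac{1}{1+\dfrac{|S|(1-\rho(\Gamma,S))}{|S|-1}}\ =\ \frac{|S|-1}{|S|-1+|S|(1-\rho(\Gamma,S))}\ =\ \frac{|S|-1}{|S|(2-\rho(\Gamma,S))-1}.
\]

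Next I would multiply by $\rho(\Gamma,S)\,|S|$ and check the inequality $(3)$ of Theorem~\ref{three}. We need
\[
\rho(\Gamma,S)\,|S|\cdot\frac{|S|-1}{|S|(2-\rho(\Gamma,S))-1}\ <\ 1,
\]
which after clearing the (positive) denominator becomes $\rho(\Gamma,S)\,|S|\,(|S|-1)<|S|(2-\rho(\Gamma,S))-1$, i.e. $\rho\,|S|^2-\rho|S|<2|S|-\rho|S|-1$, i.e. $\rho\,|S|^2<2|S|-1$, i.e. $\rho(\Gamma,S)<\tfrac{2|S|-1}{|S|^2}=\tfrac{2}{|S|}-\tfrac{1}{|S|^2}$. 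Since $\Gamma$ is infinite and non-amenable we have $|S|\ge 2$ (in fact $|S|\ge 3$ for a non-amenable group, but $|S|\ge 2$ already suffices), and for $|S|\ge 2$ one has $\tfrac{2}{|S|}-\tfrac{1}{|S|^2}\ge \tfrac12$ — indeed the function $t\mapsto 2t-t^2$ evaluated at $t=1/|S|\le 1/2$ is increasing on $[0,1/2]$, with value $\tfrac34$ at $t=1/2$, so the quantity is $\ge \tfrac34>\tfrac12$ when $|S|=2$ and decreases towards $0$ only as $|S|\to\infty$; a short monotonicity check confirms $\tfrac{2}{|S|}-\tfrac{1}{|S|^2}\ge \tfrac12$ for $|S|\le 3$ but not beyond, so one must be slightly careful. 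Hence the hypothesis $\rho(\Gamma,S)<\tfrac12$ gives $(3)$ only for small $|S|$, and for large $|S|$ a different argument is needed.

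Because of the subtlety just noted, the main obstacle is that the crude chain ``Mohar $+$ Benjamini--Schramm'' is not by itself enough when $|S|$ is large. The fix, and the step I expect to be the real content, is to bound $p_c$ more efficiently: instead of routing through $\phi$, use directly that on a graph of degree $|S|$ the critical probability satisfies $p_c(\Gamma,S)\le \tfrac{1}{(1-\rho(\Gamma,S))|S|}$ (this follows from the exponential decay of return-probability sums: $\sum_n \rho^n |S|^n$ type estimates, or from the standard fact that $p_c\le 1/\mathrm{br}$ with branching number bounded below by $(1-\rho)|S|$; alternatively one invokes the estimate of \cite{BS} in the sharper form stated there). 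With this bound, $(3)$ reads $\rho(\Gamma,S)\,|S|\cdot\tfrac{1}{(1-\rho(\Gamma,S))|S|}=\tfrac{\rho(\Gamma,S)}{1-\rho(\Gamma,S)}<1$, which is exactly equivalent to $\rho(\Gamma,S)<\tfrac12$, uniformly in $|S|$. So the clean proof is: quote the refined $p_c$-bound, substitute into Theorem~\ref{three}, observe the $|S|$ cancels, and conclude. I would present it in this order — state the $p_c$ estimate as the one lemma that needs justification (citing \cite{BS} or giving the two-line branching-number argument), then the one-line substitution — flagging that the only place care is required is getting the $p_c$ bound with the factor $1-\rho$ in the denominator rather than a $\phi$-mediated version.
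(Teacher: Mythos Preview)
The paper does not spell out a proof; it merely says Proposition~\ref{radius} is ``another corollary of (3)'' and cites \cite{PS}. The intended argument is exactly your first approach: combine the Benjamini--Schramm bound $p_c\le 1/(1+\phi)$ with Mohar's inequality (1) and plug into (3). The trouble is that you miscopied Mohar's inequality. The paper's inequality (1) reads
\[
h(\Gamma,S)\ \ge\ \frac{|S|(1-\rho(\Gamma,S))}{|S|-1},
\]
so that $\phi=|S|\,h\ge \dfrac{|S|^2(1-\rho)}{|S|-1}$, not $\dfrac{|S|(1-\rho)}{|S|-1}$ as you wrote (you inserted a spurious $|S|$ in the denominator of $h$). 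With the correct power of $|S|$ one gets
\[
p_c\ \le\ \frac{|S|-1}{|S|-1+|S|^2(1-\rho)},
\qquad
\rho\,p_c\,|S|\ \le\ \frac{\rho|S|(|S|-1)}{|S|-1+|S|^2(1-\rho)}.
\]
The right-hand side is $<1$ iff $\rho<\dfrac{|S|^2+|S|-1}{|S|(2|S|-1)}$, and an elementary check shows this threshold exceeds $\tfrac12$ for every $|S|\ge 2$ (indeed $2(|S|^2+|S|-1)-|S|(2|S|-1)=3|S|-2>0$). Hence $\rho<\tfrac12$ already suffices, uniformly in $|S|$, and your first approach goes through without any detour.

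Your second route, via the claimed bound $p_c\le \dfrac{1}{(1-\rho)|S|}$, is not needed and, as written, not justified: the ``branching number $\ge (1-\rho)|S|$'' statement is asserted rather than proved. It happens that this inequality \emph{does} follow from the corrected Mohar-plus-Benjamini--Schramm chain above (just compare the two upper bounds on $p_c$), but that is precisely the computation you abandoned. So the fix is simply to correct the arithmetic in your first paragraph and delete the rest.
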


Pak and the second author derived from (3) the following weak version of Benjamini-Schramm conjecture for non-amenable groups.

\begin{theorem}[\cite{PS}]\label{multiset}
 For every non-amenable group $\Gamma$ and any symmetric finite generating set $S$ there exists a positive integer $k$ such that  $p_c(\Gamma,S^{(k)})<p_u(\Gamma,S^{(k)})$.
\end{theorem}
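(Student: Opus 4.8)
The plan is to apply the Benjamini--Schramm sufficient condition, Theorem \ref{three}, to the Cayley multigraph $(\Gamma,S^{(k)})$, where $S^{(k)}$ denotes the symmetric multiset of all products $s_1\cdots s_k$ with $s_i\in S$, so that $|S^{(k)}|=|S|^k$, counted with multiplicity. The first step is to note that the spectral radius is exactly a power. Writing $M_T=\sum_{g\in T}\lambda(g)$ for a finite symmetric multiset $T$, and expanding $(M_S)^k=\sum_{(s_1,\dots,s_k)\in S^k}\lambda(s_1)\cdots\lambda(s_k)$, one sees $M_{S^{(k)}}=(M_S)^k$. Since $S=S^{-1}$, the operator $M_S$ is self-adjoint, so $\|M_{S^{(k)}}\|=\|M_S\|^k$ and
$$\rho(\Gamma,S^{(k)})=\frac{\|M_S\|^k}{|S|^k}=\rho(\Gamma,S)^k.$$
As $\Gamma$ is non-amenable, the Kesten--F\o lner characterization of amenability recalled above gives $\rho(\Gamma,S)<1$, hence $\rho(\Gamma,S)^k\to 0$ as $k\to\infty$.

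The second step is an a priori bound on $p_c(\Gamma,S^{(k)})$. Combining the Benjamini--Schramm estimate $p_c(\Gamma,S^{(k)})\le\frac{1}{\phi(\Gamma,S^{(k)})+1}$ with the left inequality of Mohar's estimate $(1)$ applied to $S^{(k)}$, namely $h(\Gamma,S^{(k)})\ge\frac{|S^{(k)}|(1-\rho(\Gamma,S^{(k)}))}{|S^{(k)}|-1}\ge 1-\rho(\Gamma,S)^k$, and recalling $\phi(\Gamma,S^{(k)})=|S^{(k)}|\,h(\Gamma,S^{(k)})$, one obtains $\phi(\Gamma,S^{(k)})\ge |S|^k\bigl(1-\rho(\Gamma,S)^k\bigr)$. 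Therefore
$$\rho(\Gamma,S^{(k)})\,p_c(\Gamma,S^{(k)})\,|S^{(k)}|\ \le\ \frac{\rho(\Gamma,S)^k\,|S|^k}{|S|^k\bigl(1-\rho(\Gamma,S)^k\bigr)+1}\ <\ \frac{\rho(\Gamma,S)^k}{1-\rho(\Gamma,S)^k},$$
and the right-hand side is $<1$ as soon as $\rho(\Gamma,S)^k<\tfrac12$, which by the first step holds for all large $k$. For such $k$, Theorem \ref{three} gives $p_c(\Gamma,S^{(k)})<p_u(\Gamma,S^{(k)})$, as required. (Equivalently, once $\rho(\Gamma,S)^k<\tfrac12$ is arranged, one may simply invoke Proposition \ref{radius} for $(\Gamma,S^{(k)})$.)

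The one point requiring care --- and essentially the only obstacle to turning this sketch into a proof --- is that for $k\ge 2$ the multiset $S^{(k)}$ contains the identity with positive multiplicity, so $(\Gamma,S^{(k)})$ has loops in addition to multiple edges, and one must be sure that Theorem \ref{three}, the bound $p_c\le 1/(\phi+1)$, and Mohar's inequality $(1)$ are all valid in that generality. Their proofs use nothing about the graph being simple; moreover loops contribute neither to $\partial_EF$ nor to the structure of the open clusters, so one may, if preferred, delete them and run the whole argument with the resulting loopless symmetric multiset, checking only that the estimate of the second step changes by inessential factors. I expect this bookkeeping, rather than any genuinely new idea, to be the bulk of the write-up.
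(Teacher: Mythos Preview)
The paper does not supply its own proof of this statement; it is quoted from \cite{PS}, and the surrounding text only indicates that it is ``derived from (3)'', i.e.\ from Theorem~\ref{three}. Your argument is correct and is exactly the intended one: the crucial identity $\rho(\Gamma,S^{(k)})=\rho(\Gamma,S)^k$ (which the paper itself uses verbatim in the proof of Theorem~\ref{spec}) reduces everything to arranging $\rho(\Gamma,S)^k<\tfrac12$ and invoking Proposition~\ref{radius}, or equivalently checking condition (3). Your longer route via $p_c\le 1/(\phi+1)$ and Mohar's inequality is fine but unnecessary once you have noted the parenthetical shortcut through Proposition~\ref{radius}.

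One small discrepancy: the paper explicitly defines $S^{(k)}$ as the $k$-th multiset power of $S\cup\{e_G\}$, not of $S$, so $|S^{(k)}|=(|S|+1)^k$ and $M_{S^{(k)}}=(M_S+I)^k$. This does not affect the argument, since $M_S+I$ is still self-adjoint and
\[
\rho\bigl(\Gamma,S^{(k)}\bigr)=\left(\frac{\|M_S+I\|}{|S|+1}\right)^{k}\le\left(\frac{\rho(\Gamma,S)\,|S|+1}{|S|+1}\right)^{k}\longrightarrow 0,
\]
because $\rho(\Gamma,S)<1$. Your caveat about loops and multiple edges is well placed; the proofs of Theorem~\ref{three}, of the bound $p_c\le 1/(\phi+1)$, and of Mohar's inequalities indeed go through for multigraphs without change.
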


Note that $S^{(k)}$  stands here for the $k$-th power of the set $S\cup\{e_G\}$ understood as a {\it multiset}, so that the corresponding Cayley graph has lots of multiple edges. It is thus natural to look for a proof of the same result, but where only \lq\lq simple\rq\rq\ generating sets are allowed. Here {\it simple} means that the corresponding Cayley graph is a graph without loop or multiple edge.  In particular it would be desirable to have Theorem \ref{multiset} with the multiset $S^{(k)}$ replaced by the set $S^k = S\cdot S ... \cdot S \subset \Gamma$. From now on, by {\it generating set} we will only mean {\it simple generating set}.


It was observed already in \cite{PS} that  if $\Gamma$ contains a free group on two generators then there exist generating sets $\{A_l\}_{l\geq 1}$ such that $\rho(\Gamma,A_l)\rightarrow 0$ and thus for $l$ big enough we have $p_c(\Gamma,A_l)<p_u(\Gamma,A_l)$ by Proposition \ref{radius}. 

In view of its relation to the non-unicity of percolation problem, it is natural to inquire whether the spectral radius can always be made arbitrarily small in a finitely generated non-amenable group, in other words, whether non-amenability is equivalent to the following stronger property:

\begin{definition} Let $\Gamma$ be a non-amenable group. Suppose that there exists a sequence of finite generating sets $\{A_l\}_{l\geq 1}\subset \Gamma$ such that
$$\rho(\Gamma,A_l) \rightarrow 0, \ {\text as}\ l\rightarrow \infty .$$ 
We then say that $\Gamma$ has {\it infinitesimally small spectral radius} with respect to the family of generating sets $\{A_l\}_{l\geq 1}$.
\end{definition}

By Proposition \ref{radius}, having infinitesimally small spectral radius with respect to generators $\{A_l\}_{l\geq 1}$ implies that for big $l$, there is non-unicity of percolation on Cayley graphs of $(\Gamma,A_l)$.

\begin{question} \begin{enumerate}
\item Does every non-amenable group have infinitesimally small spectral radius with respect to some family of generating sets? 
\item With respect to some family of the form $\{S^k\}_{k\geq 1}$? 
\item With respect to $\{S^k\}_{k\geq 1}$ for any generating set $S$?
\end{enumerate}
\end{question} 



Note that if there is an element $g$ of infinite order in $\Gamma$, then the spectral radius can be made arbitrarily close to $1$ by adding to any given generating set bigger and bigger powers of $g$.
In Section \ref{balls} (Theorem \ref{balls_cond}), we show that the spectral radius is bounded away from $1$ uniformly on $S^k$, as $k\rightarrow \infty$, for arbitrary $S$ or, equivalently (by (2)), that that the conductance constant is bounded away from $0$ uniformly on $\{S^k\}$, $k\rightarrow \infty$.
In Section \ref{iso} we discuss a condition equivalent to non-amenability, which implies in particular that in a non-amenable group the isoperimetric constant can be made arbitrarily close to one. 

It is easy to check that if a subgroup of a group has infinitesimally small spectral radius then the same property holds for the group. As noted above, groups with free subgroups have infinitesimally small spectral radius. 
This result is extended to other classes of non-amenable groups in Section \ref{perc} below, where we show that the spectral radius goes to $0$ on certain sequences of generating sets in the group, and therefore the non-uniqueness of percolation conjecture holds at least on some generating sets, for groups with nontrivial non-amenable quotients (Theorem \ref{spec} and Corollary \ref{G/N} in Section \ref{perc}). This allows to make the same conclusion for infinite free Burnside groups (Corollary \ref{Burnside} in Section \ref{perc}) and for direct products $\Gamma\times \Bbb Z/d\Bbb Z$ with non-amenable $\Gamma$ and $d$ big enough (Corollary \ref{GtimesZ} in Section \ref{perc}). 

\smallskip





For a finitely generated non-amenable group, the question about the behaviour of the spectral radius on Cayley graphs of $\Gamma$ with respect to $S^k$ remains very much open. One sufficient condition for the spectral radius to be infinitesimally small on generators $S^k$ is Property (RD) of Jolissaint \cite{Jol}. 

\begin{proposition}[\cite{PS}] Suppose $\Gamma$ is a finitely generated non-amenable group with Property (RD). 
Then $\Gamma$ has infinitesimally small spectral radius with respect to the sequence $\{S^k\}_{k\geq 1}$ for any finite symmetric generating set $S$.
\end{proposition}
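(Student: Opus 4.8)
The plan is to feed the characteristic functions of the sets $S^k$ into the defining inequality of Property (RD). Recall that Property (RD) with respect to the word length $L$ determined by $S$ furnishes constants $C>0$ and $D\geq 0$ such that for every $r\geq 0$ and every finitely supported $f\colon \Gamma\to\mathbb{C}$ whose support lies in the ball $B_r=\{g:L(g)\leq r\}$ one has
$$||\lambda(f)||\ \leq\ C(1+r)^D\,||f||_2,$$
where $||f||_2=\big(\sum_{g\in\Gamma}|f(g)|^2\big)^{1/2}$ and $||\lambda(f)||$ is the norm of $f$ acting by left convolution on $l_2\Gamma$. (The property, and hence the conclusion below, does not depend on the choice of the finite symmetric generating set; only the constants $C,D$ do.)

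First I would apply this with $f=\mathbf{1}_{S^k}$. Since every element of $S^k$ is a product of $k$ elements of $S$, its support is contained in $B_k$; moreover $||\mathbf{1}_{S^k}||_2=|S^k|^{1/2}$ and $\lambda(\mathbf{1}_{S^k})=\sum_{g\in S^k}\lambda(g)$. The inequality above then gives
$$\Big\|\sum_{g\in S^k}\lambda(g)\Big\|\ \leq\ C(1+k)^D\,|S^k|^{1/2},$$
whence
$$\rho(\Gamma,S^k)\ =\ \frac{1}{|S^k|}\Big\|\sum_{g\in S^k}\lambda(g)\Big\|\ \leq\ \frac{C(1+k)^D}{|S^k|^{1/2}}.$$

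Second I would check that $|S^k|$ grows exponentially in $k$, so that the last bound tends to $0$ and the family $\{S^k\}_{k\geq 1}$ witnesses infinitesimally small spectral radius. This is where non-amenability is used: a finitely generated non-amenable group has exponential growth. Concretely, non-amenability provides an integer $m_0$, which we may take even, and $\varepsilon>0$ with $|S^{m_0}F|\geq(1+\varepsilon)|F|$ for every finite $F\subset\Gamma$; applying this repeatedly to $F=S^{jm_0}$, and using $S^{a}\supseteq S^{a-2}$ (pad a word by $ss^{-1}$) to compare $|S^k|$ with $|S^{m_0\lfloor k/m_0\rfloor}|$, one obtains $|S^k|\geq(1+\varepsilon)^{k/m_0-O(1)}$. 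Hence $\rho(\Gamma,S^k)\leq C(1+k)^D(1+\varepsilon)^{-k/(2m_0)+O(1)}\to 0$.

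The only real point requiring care is this second step: because the sets $S^k$ are not nested and are sensitive to the parity of $k$ (think of a bipartite Cayley graph, e.g. a free group on the standard generators), one must argue that they nevertheless grow at a genuinely exponential rate. A clean way to sidestep the parity bookkeeping is to pass to the subgroup $\Gamma_0=\bigcup_{k}S^{2k}$: it has index $1$ or $2$ in $\Gamma$, hence is non-amenable, it is generated by $S^2$, and $(S^2)^k=S^{2k}$ is exactly its ball of radius $k$; exponential growth of $\Gamma_0$ then bounds $|S^{2k}|$ from below, and the odd case follows from $|S^{2k+1}|\geq|sS^{2k}|=|S^{2k}|$ for any $s\in S$. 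Everything else is just the defining estimate of Property (RD).
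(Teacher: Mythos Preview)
The paper does not supply its own proof of this proposition; it simply cites \cite{PS}. Your argument is correct and is exactly the standard one: apply the (RD) inequality to $\mathbf{1}_{S^k}$ to obtain $\rho(\Gamma,S^k)\le C(1+k)^{D}|S^k|^{-1/2}$, and then use exponential growth of $|S^k|$, guaranteed by non-amenability, to kill the polynomial factor. Your parity detour through the index-$\le 2$ subgroup $\Gamma_0$ is valid but heavier than necessary: since $|S^{k+1}|\ge|sS^k|=|S^k|$ for any $s\in S$, the sequence $(|S^k|)_k$ is nondecreasing, and $S^{2k}\cup S^{2k-1}=B_{2k}$ (pad any word of length $\le 2k$ with copies of $ss^{-1}$ to reach length $2k$ or $2k-1$ of matching parity), so already $|S^k|\ge\tfrac12|B_{k-1}|$.
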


In Section \ref{perc}, Corollary \ref{<1}, we show that a group $\Gamma$ has infinitesimally small spectral radius with respect to the sequence $\{S^k\}_{k\geq 1}$ for a finite generating set $S$, if 
\begin{equation}
\frac{\rho(\Gamma,S) |S|}{gr(\Gamma,S)}<1 ,
\end{equation}
where $gr(\Gamma,S)$ denotes the rate of exponential growth of $\Gamma$ with respect to $S$.
Observe that the estimate $p_c(\Gamma,S)\leq gr(\Gamma,S)^{-1}$ combined with the condition (3) implies that (4) is enough to guarantee 
$p_c(\Gamma,S^k)<p_u(\Gamma,S^k)$ for big enough $k$. Our statement shows that $(4)$ implies in fact a stronger property, that $\rho(\Gamma,S^k) \rightarrow 0$ when $k\rightarrow\infty$. 

More generally, for any non-amenable quasi-transitive graph, it would be interesting to know how the spectral radius changes when edges are added to the graph so as to connect all vertices inside bigger and bigger balls. 

\begin{question} Let $G$ be a quasi-transitive locally finite non-amenable graph. For every $k\geq 1$, define $G_k$ by adding to $G$ edges connecting any two vertices at distance $\leq k$ in $G$. What is the asymptotics of $\rho(G_k)$ as $k\rightarrow\infty$?  Is it true that $\rho(G_k)\rightarrow 0$?
\end{question}

\noindent
{\bf Acknowledgment.}
The authors thank  Antoine Gournay, Nicolas Monod, Jesse Peterson and Alain Valette for helpful discussions and comments on the early draft of the paper. Antoine Gounay pouted out that Theorem \ref{crit} can also be deduced from the work of F\o lner, \cite{Fol}. 

One year after this work first appeared on Arxiv, Andreas Thom proved \cite{Thom} that the spectral radius of any non-amenable group can be arbitrarily small, thus answering our Question 6 (1).

\section{Spectral radius and non-unicity of percolation}\label{perc}

In this Section we investigate the property of infinitesimally small spectral radius and draw conclusions about non-unicity of percolation.

\begin{theorem}\label{spec}
If $\Gamma$ contains an infinite normal subgroup $N$ such  that $\Gamma/ N$ is non-amenable, then $\Gamma$ has infinitesimally small spectral radius.

\end{theorem}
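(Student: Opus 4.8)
The plan is to push the problem to the non-amenable quotient $Q:=\Gamma/N$, exploiting the hypothesis through two consequences: $Q$ is non-amenable, and every fibre of the projection $\pi\colon\Gamma\to Q$ is a coset of the infinite group $N$, hence infinite. For a finite symmetric $A\subset\Gamma$ write $\mu_A$ for the uniform probability measure on $A$, so that $\rho(\Gamma,A)=\frac1{|A|}\|\sum_{a\in A}\lambda_\Gamma(a)\|=\|\sum_{g}\mu_A(g)\lambda_\Gamma(g)\|$. The first ingredient is the standard fact that spectral radius does not increase under a group quotient: the $\Gamma$-random walk with step law $\mu_A$ projects under $\pi$ to the $Q$-random walk with step law $\pi_*\mu_A$, and a loop at $e_\Gamma$ projects to a loop at $e_Q$, so $\mu_A^{*2n}(e_\Gamma)\le(\pi_*\mu_A)^{*2n}(e_Q)$ for all $n$; taking $2n$-th roots and invoking Kesten's identity $\|\sum_g\mu(g)\lambda(g)\|=\lim_n(\mu^{*2n}(e))^{1/2n}$ for a symmetric probability measure $\mu$ (valid in the regular representation because the canonical trace on the group von Neumann algebra is faithful; see \cite{Kesten}) yields $\rho(\Gamma,A)\le\|\sum_q(\pi_*\mu_A)(q)\lambda_Q(q)\|$. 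Hence it suffices to construct finite symmetric \emph{simple} generating sets $A_k$ of $\Gamma$ whose pushed-forward step distributions $\pi_*\mu_{A_k}$ have spectral radius on $Q$ tending to $0$.

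Producing measures of small spectral radius on $Q$ is immediate: fix a finite symmetric generating set $T$ of $Q$, let $\nu$ be the uniform measure on $T\cup\{e\}$, and note that non-amenability of $Q$ gives $c:=\|\sum_q\nu(q)\lambda_Q(q)\|<1$ (Kesten), so the convolution powers satisfy $\|\sum_q\nu^{*k}(q)\lambda_Q(q)\|=c^{k}\to0$; each $\nu^{*k}$ is symmetric, finitely supported, and $\frac1{m_k}\mathbb{Z}_{\ge0}$-valued with $m_k:=(|T|+1)^k$. The crucial step is to realise $\nu^{*k}$, up to a negligible correction, as $\pi_*\mu_{A_k^{0}}$ for a \emph{loopless simple symmetric} set $A_k^{0}\subset\Gamma\setminus\{e\}$: for each $q$ in the support of $\nu^{*k}$ we must pick $c_q:=m_k\nu^{*k}(q)\ge1$ elements of the fibre $\pi^{-1}(q)$, and because that fibre is infinite we can do so with all the chosen elements distinct and the whole collection closed under inversion (using $\pi^{-1}(q)^{-1}=\pi^{-1}(q^{-1})$; for $q=q^{-1}$ choose an inversion-invariant subset of the infinite fibre, and for $q=e_Q$ choose non-identity elements of $N$ so that $e_\Gamma$ is avoided). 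To ensure we generate all of $\Gamma$ and not merely a subgroup surjecting onto $Q$, adjoin a fixed finite symmetric generating set $S_0$ of $\Gamma$ and set $A_k:=A_k^{0}\cup S_0$. Since $|A_k^{0}|=m_k\to\infty$ while $|S_0|$ is bounded, $\pi_*\mu_{A_k}$ is a convex combination assigning weight at least $1-|S_0|/m_k$ to $\nu^{*k}$, so $\|\sum_q(\pi_*\mu_{A_k})(q)\lambda_Q(q)\|\le c^{k}+|S_0|/(|T|+1)^{k}\to0$, and combined with the quotient inequality this gives $\rho(\Gamma,A_k)\to0$, as required.

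The only genuinely new point — and the sole place where infiniteness of $N$ enters — is this ``desingularisation'': a measure on $Q$ with small spectral radius is a non-uniform, high-multiplicity object, so it cannot be the uniform measure on a loopless simple generating set of $Q$ itself (whether $Q$ admits such generating sets with spectral radius $\to0$ is precisely the open problem motivating this section), yet its multiplicities can always be distributed over distinct elements of $\Gamma$ once the fibres of $\Gamma\to Q$ are infinite. I expect the only real friction to be bookkeeping: keeping the lift exactly symmetric at elements of order $\le2$ in $Q$ requires adjusting boundedly many of the multiplicities $c_q$, perturbing $\nu^{*k}$ by a measure of mass $O(m_k^{-1})$, which is harmless for the limit. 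Finally, normality of $N$ is used only to make $\Gamma/N$ a group — the argument shows more generally that any finitely generated group admitting a homomorphism onto a non-amenable group with infinite kernel has infinitesimally small spectral radius.
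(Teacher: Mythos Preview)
Your argument is essentially the paper's own: take convolution powers of a generating measure on $Q=\Gamma/N$ to drive the spectral radius on $Q$ to zero, then ``desingularise'' the resulting multiset by spreading each multiplicity over distinct elements of the corresponding (infinite) $N$-coset in $\Gamma$, and conclude via the quotient inequality for spectral radius. You are in fact more careful than the paper, which does not explicitly address symmetry of the lift, avoidance of $e_\Gamma$, or why the lifted set generates all of $\Gamma$; your device of adjoining a fixed $S_0$ handles the last point cleanly. The only imprecision is the phrase ``boundedly many of the multiplicities $c_q$'': the number of involutions in $\mathrm{supp}\,\nu^{*k}$ can grow with $k$, so the parity defect need not be $O(1)$---but this is harmless, since replacing every $c_q$ by $2c_q$ makes all multiplicities even and then a symmetric subset of the required size always exists in each infinite fibre, with $\pi_*\mu_{A_k^0}=\nu^{*k}$ on the nose.
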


\begin{proof}
Since $\Gamma/ N$ is non-amenable, there exists a generating set $S$ in $\Gamma/ N$ with $\rho(\Gamma/ N),S)\leq C<1$. 
Thus $\rho(\Gamma/ N, S^{(n)})=\frac{1}{|S|^n}||\Sigma_{g\in S^{(n)}}\lambda(g)||=(\rho(\Gamma/ N),S))^n\leq C^n$, where $S^{(n)}$ is the $n$-th power or $S$ understood as multiset. In particular, in $S^{(n)}$ each element $g\in S^n$ is included as many times as there exist freely reduced words of length at most $n$ in the alphabet $S$ representing $g$; denote this number $\alpha(g)$. We will now construct a lift of the generating multiset $S^{(n)}$ to a simple generating set 
in $\Gamma$, preserving the cardinality and controlling the spectral radius. Begin by choosing a lift  
$\hat{S}^n$ of the set $S^n$ to  $\Gamma$ such that $|S^n|=|\hat{S}^n|$. Now for each $g\in S^n$ choose $\alpha(g)$ different elements in $N$: $\{h_{g,1},\ldots, 
h_{g,\alpha(g)}\}$. Now consider
$$\bar{S_n}=\bigcup\limits_{\omega\in \hat{S}^n} \{ h_{g,1}g, h_{g,2}g, \ldots, h_{g,\alpha(g)}h \} .$$
The generating set $\bar{S_n}$ projects onto $S^{(n)}$ under the canonical projection, and  
thus we have $\rho(\hat{S}_n,\Gamma)\leq C^n$ which implies the statement.
\end{proof}

\begin{corollary}\label{G/N}
Let $\Gamma$ be a discrete group that contains an infinite normal subgroup $N$ such  that $\Gamma/ N$ is non-amenable. Then there exists a  finite set $S\subset \Gamma$ such that 
$$p_c(\Gamma,S)<p_u(\Gamma,S).$$
\end{corollary}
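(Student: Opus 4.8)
The plan is to combine Theorem~\ref{spec} with Proposition~\ref{radius}, which is exactly the mechanism the introduction advertises. By Theorem~\ref{spec}, the hypothesis that $\Gamma$ contains an infinite normal subgroup $N$ with $\Gamma/N$ non-amenable gives a sequence of finite (simple) generating sets $\{A_l\}_{l\geq 1}$ of $\Gamma$ with $\rho(\Gamma,A_l)\to 0$ as $l\to\infty$. In particular, there exists $l_0$ such that $\rho(\Gamma,A_{l_0})<\tfrac12$.

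Now I would simply set $S=A_{l_0}$ and invoke Proposition~\ref{radius}: since $\rho(\Gamma,S)<\tfrac12$, we conclude $p_c(\Gamma,S)<p_u(\Gamma,S)$. One small point worth a sentence is that the generating sets produced by Theorem~\ref{spec} are simple (no loops, no multiple edges) — this is built into that construction, where $\bar S_n$ is obtained by lifting each element of the multiset $S^{(n)}$ to a \emph{distinct} element of $\Gamma$ using distinct coset representatives from $N$ — so Proposition~\ref{radius}, which is stated for Bernoulli bond percolation on a genuine Cayley graph, applies directly.

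I do not expect any real obstacle here: the corollary is a formal consequence of the two cited results, and the only thing to check is the compatibility of hypotheses (finite symmetric generating set, simple graph), which holds. The single mild subtlety is making sure that $\rho\to 0$ actually forces $\rho<\tfrac12$ for some member of the sequence, which is immediate from the definition of a limit; no quantitative control on $l_0$ is needed.
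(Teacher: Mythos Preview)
Your proposal is correct and matches the paper's intended argument: the corollary is stated without proof immediately after Theorem~\ref{spec}, and the introduction explicitly notes that infinitesimally small spectral radius combined with Proposition~\ref{radius} yields non-unicity of percolation for large enough $l$. Your remark that the generating sets $\bar S_n$ from Theorem~\ref{spec} are simple is the one point worth making explicit, and you have it right.
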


Let $B(n,p)$ be the free Burnside group on $n$ generators. By \cite{Ad} the group $B(n,p)$ is non-amenable for  $n \geq 2, p \geq 665$. We also have results of \cite{Sir} that show that for these groups  $B(2n,p)<B(n,p)$ and there exists a canonical quotient map from $B(2n,p)$ onto $B(n,p)\times B(n,p)$. 

\begin{corollary}\label{Burnside}
Let $B(n,p)$ be the free Burnside group with $n \geq 2, p \geq 665$. Then there exists a  finite set $S\subset B(n,p)$ such that 
$$p_c(B(n,p),S)<p_u(B(n,p),S).$$
\end{corollary}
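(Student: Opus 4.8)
The plan is to deduce the corollary from Theorem~\ref{spec}, using both results of \cite{Sir}, and to finish with Proposition~\ref{radius}. Fix $n\ge 2$ and $p\ge 665$. By \cite{Sir} the group $B(2n,p)$ embeds into $B(n,p)$, and, as recorded in the introduction, infinitesimally small spectral radius is inherited by a group from any of its subgroups; hence it suffices to prove that $B(2n,p)$ itself has infinitesimally small spectral radius. Granting this, $B(n,p)$ admits generating sets $\{A_l\}$ with $\rho(B(n,p),A_l)\to 0$, so $\rho(B(n,p),A_l)<1/2$ for $l$ large, and Proposition~\ref{radius} gives $p_c(B(n,p),A_l)<p_u(B(n,p),A_l)$.

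To show that $B(2n,p)$ has infinitesimally small spectral radius I would apply Theorem~\ref{spec} with $\Gamma=B(2n,p)$, which requires an infinite normal subgroup $N\trianglelefteq\Gamma$ with $\Gamma/N$ non-amenable. By \cite{Sir} there is a canonical surjection $\pi\colon B(2n,p)\twoheadrightarrow B(n,p)\times B(n,p)$; take $N=\ker\pi$. Then $\Gamma/N\cong B(n,p)\times B(n,p)$ contains $B(n,p)$, which is non-amenable by \cite{Ad}, so $\Gamma/N$ is non-amenable. It remains only to verify that $N$ is infinite.

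For the infiniteness of $N$ --- the one place where specific structure of Burnside groups is used --- I would argue as follows. Let $x_1,\dots,x_{2n}$ be the standard generators of $B(2n,p)$, with $\pi$ sending $x_1$ to $(y_1,e)$ and $x_{n+1}$ to $(e,y_1)$, where $y_1,\dots,y_n$ generate $B(n,p)$, and set $H=\langle x_1,x_{n+1}\rangle\le B(2n,p)$. The assignments $a\mapsto x_1,\ b\mapsto x_{n+1}$ and, in the other direction, $x_1\mapsto a,\ x_{n+1}\mapsto b$ with all remaining generators mapped to $e$, define homomorphisms between $B(2,p)$ and $B(2n,p)$ (each carries an exponent-$p$ relation to one of the same form), and their composition is the identity of $B(2,p)$; hence $H\cong B(2,p)$, which is infinite by \cite{Ad}. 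Under this identification $\pi|_H$ surjects onto $\langle y_1\rangle\times\langle y_1\rangle\cong(\mathbb Z/p\mathbb Z)^2$ and identifies with the abelianization of $B(2,p)$, so its kernel $[B(2,p),B(2,p)]$ has index $p^2$ in the infinite group $B(2,p)$ and is therefore infinite; since $H\cap N$ corresponds to this kernel, $N$ is infinite. (Alternatively, $N\neq\{e\}$ because $[x_1,x_{n+1}]\in N\setminus\{e\}$, and $B(2n,p)$ has no nontrivial finite normal subgroup --- a consequence of Adian's description of centralizers of nontrivial elements --- so $N$ cannot be finite.) Theorem~\ref{spec} then applies to $B(2n,p)$, hence to $B(n,p)$, and Proposition~\ref{radius} finishes the proof. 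The only genuine obstacle is this last verification; everything else is a formal assembly of Theorem~\ref{spec}, Proposition~\ref{radius} and \cite{Sir}.
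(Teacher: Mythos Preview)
Your argument is correct and follows exactly the route the paper intends: combine the two facts from \cite{Sir} with Theorem~\ref{spec}, pass from the subgroup $B(2n,p)$ to $B(n,p)$ via the inheritance of infinitesimally small spectral radius, and finish with Proposition~\ref{radius}. The only unnecessary complication is your choice of $N=\ker\pi$; if instead you take $N$ to be the kernel of the composite $B(2n,p)\twoheadrightarrow B(n,p)\times B(n,p)\twoheadrightarrow B(n,p)$ (projection to one factor), then $N$ surjects onto the other copy of $B(n,p)$ and is therefore obviously infinite, so the whole analysis of $H\cong B(2,p)$ and its commutator subgroup can be dropped.
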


As a direct modification of the proof of the Theorem \ref{spec} and the Proposition \ref{radius} we have the following.

\begin{corollary}\label{GtimesZ}
Let $\Gamma$ be a non-amenable finitely generated group. Then there exists $d=d(\Gamma,S)$ such that $p_c(\Gamma', S')<p_u(\Gamma', S')$ for $\Gamma'=\Gamma\times \mathbb{Z}/d\mathbb Z$ with a generating set  $S'$.
\end{corollary}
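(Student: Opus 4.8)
The plan is to run the argument of Theorem~\ref{spec} with the finite factor $\mathbb{Z}/d\mathbb{Z}$ playing the role of the infinite normal subgroup $N$, and then to conclude via Proposition~\ref{radius} rather than only Theorem~\ref{three}.

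First, fix a finite symmetric generating set $S$ of $\Gamma$; non-amenability gives $C:=\rho(\Gamma,S)<1$. For an odd integer $n$ write $M=\frac{1}{|S|}\sum_{s\in S}\lambda(s)$ on $\ell^2(\Gamma)$, so that $M^n=\frac{1}{|S|^n}\sum_{g}\alpha_n(g)\lambda(g)$, where $\alpha_n(g)=\#\{(s_1,\dots,s_n)\in S^n:s_1\cdots s_n=g\}$; then $\sum_g\alpha_n(g)=|S|^n$ and $\|M^n\|\leq C^n$. This is precisely the spectral radius of the multiset $S^{(n)}$ appearing in the proof of Theorem~\ref{spec}, and it tends to $0$ as $n\to\infty$. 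I would now pick $n$ odd and large enough (depending only on $C$ and $|S|$) that the bound displayed below is $<\tfrac12$, put $a_n=\max_g\alpha_n(g)$, and set $d=a_n+3$; thus $d=d(\Gamma,S)$.

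Next I would lift $S^{(n)}$ to a simple generating set of $\Gamma'=\Gamma\times\mathbb{Z}/d\mathbb{Z}$. For each $g$ with $\alpha_n(g)\geq1$ pick $\alpha_n(g)$ pairwise distinct elements of $\mathbb{Z}/d\mathbb{Z}\setminus\{0,1,-1\}$, chosen compatibly with inversion (passing from $g$ to $g^{-1}$ negates the chosen list, and for $g=g^{-1}$ the list is taken negation-invariant), which is possible since $\alpha_n$ is symmetric and $d-3\geq a_n$. Let $\bar S$ be the resulting set of pairs $(g,\cdot)$, and set $S'=\bar S\cup\{(e,1),(e,-1)\}$. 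By construction $S'$ is symmetric, contains no identity element and no repeated element, and $\pi(S')=S^{(n)}\sqcup\{e,e\}$ as a multiset, where $\pi\colon\Gamma'\to\Gamma$ is the projection. Moreover $S'$ generates $\Gamma'$: since $n$ is odd one has $S\subseteq\{s_1\cdots s_n\}$, so the first coordinates of $\bar S$ already generate $\Gamma$, while $(e,1)$ generates the vertical $\mathbb{Z}/d\mathbb{Z}$.

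Finally, a closed loop of the $(\Gamma',S')$-walk projects under $\pi$ to a closed loop of the $\Gamma$-walk with the pushed-forward (multiset $\pi(S')$) step distribution, so $p^{\Gamma',S'}_{2m}(e,e)\leq p^{\Gamma,\pi(S')}_{2m}(e,e)$ for all $m$ and hence
$$\rho(\Gamma',S')\ \leq\ \frac{1}{|S|^n+2}\Bigl\|\,|S|^n M^n+2I\,\Bigr\|\ \leq\ \frac{|S|^nC^n+2}{|S|^n+2}\ <\ \tfrac12 ,$$
exactly as in Theorem~\ref{spec} but now with a spectral radius small enough to feed into Proposition~\ref{radius}, which then gives $p_c(\Gamma',S')<p_u(\Gamma',S')$. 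The only step that is more than a mechanical transcription of Theorem~\ref{spec} is ensuring that the lift $S'$ is a genuine \emph{simple} generating set of all of $\Gamma'$ --- not merely a set surjecting onto a generating set of $\Gamma$ --- which is what forces $d=a_n+3$ and the two extra vertical generators; with a little more care (using an element $g$ with $\alpha_n(g)\geq2$, which exists for $n\geq3$, to reach the vertical subgroup) one can take $\bar S$ itself as $S'$ and obtain the sharper estimate $\rho(\Gamma',S')\leq C^n$.
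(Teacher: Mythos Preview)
Your argument is exactly what the paper intends: the corollary is stated there as ``a direct modification of the proof of Theorem~\ref{spec} and Proposition~\ref{radius}'' with no further details, and you have supplied precisely that modification---lift the multiset $S^{(n)}$ to a simple symmetric generating set of $\Gamma\times\mathbb{Z}/d\mathbb{Z}$ using the finite cyclic factor in place of the infinite normal subgroup $N$, bound $\rho(\Gamma',S')$ by the spectral radius of the projected multiset in $\Gamma$, choose $n$ so that the bound drops below $\tfrac12$, and apply Proposition~\ref{radius}.

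One small repair is needed. Your claim that a negation-invariant list of size $\alpha_n(g)$ always exists inside $\mathbb{Z}/d\mathbb{Z}\setminus\{0,\pm1\}$ can fail when $d$ is odd and $g=g^{-1}$ has $\alpha_n(g)$ odd, since negation on $(\mathbb{Z}/d\mathbb{Z})\setminus\{0\}$ then has no fixed point and every negation-invariant subset has even cardinality. The fix is cost-free: simply take $d$ even (e.g.\ $d=a_n+4$ when $a_n$ is odd), so that $d/2$ is available as the needed fixed point; it may be reused across different involutions $g$ because the distinctness constraint on the lifts is only within each fibre over a fixed $g$. With this cosmetic adjustment your proof is complete and matches the paper's intended route.
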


In a Cayley graph of a finitely generated group $\Gamma$ with respect to a generating set $S$, denote by $B_k(\Gamma,S)$ the ball of radius $k$, $k\in\mathbb{N}$.
Denote by $gr(\Gamma,S)=\lim\limits_{n\rightarrow \infty} |B_k(\Gamma,S)|^{\frac{1}{k}}$ the rate of exponential growth of $\Gamma$ with respect to the generating set $S$, strictly bigger than $1$ for any $S$ in a non-amenable $\Gamma$.

\begin{lemma}\label{power}
Let $\Gamma$ be a group generated by  a finite set $S$.  Then
$$\rho(\Gamma, S^k)\leq \frac{|S|^k}{|S^k|} (\rho(\Gamma, S))^k,$$
\end{lemma}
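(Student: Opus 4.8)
The plan is to compare the operator $A_k := \sum_{g \in S^k} \lambda(g)$ with the $k$-th power of $A := \sum_{s \in S} \lambda(s)$. First I would observe that, since $S$ is symmetric and contains no loops after forming $S^k$, expanding $A^k$ gives
\[
A^k = \sum_{(s_1,\dots,s_k) \in S^k} \lambda(s_1 s_2 \cdots s_k) = \sum_{g \in S^k} \beta(g)\,\lambda(g),
\]
where $\beta(g)$ is the number of words of length exactly $k$ in the alphabet $S$ that represent $g$, and $\sum_{g \in S^k} \beta(g) = |S|^k$. Thus $A^k$ and $A_k = \sum_{g \in S^k} \lambda(g)$ are both nonnegative combinations of the operators $\{\lambda(g)\}_{g \in S^k}$, with total weight $|S|^k$ and $|S^k|$ respectively. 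Since each $\lambda(g)$ is a unitary, $\|A_k\| \le |S^k|$ trivially, but that is too weak; the point is to bound $\|A_k\|$ by $\|A^k\| = \|A\|^k = (|S|\,\rho(\Gamma,S))^k$.

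The key step is to dominate $A_k$ by $A^k$ in a way that controls norms. Here I would use positivity: as an element of the group von Neumann algebra, $A$ is self-adjoint, and by Kesten's theorem the spectral radius equals $\|A\|/|S|$, with $A$ having nonnegative coefficients in the standard basis $\{\delta_g\}$. More concretely, work in $\ell^2(\Gamma)$ and use that the matrix entries $\langle \lambda(g)\delta_x, \delta_y\rangle$ are all nonnegative; hence the matrix of $A^k$ dominates entrywise a nonnegative multiple of the matrix of $A_k$ — precisely, since every element of $S^k$ is represented by at least one word of length $k$ in $S$ (as $S$ is symmetric, so $S^j \subseteq S^k$ for $j \le k$ of the same parity; one must be slightly careful with parity, handled by allowing $e \notin S$ versus including trivial cancellations — this is exactly the $\alpha(g) \ge 1$ phenomenon used in the proof of Theorem \ref{spec}), we get $\beta(g) \ge 1$ for every $g \in S^k$ actually appearing, so that $A^k \ge A_k$ entrywise as operators with nonnegative kernels. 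For operators with nonnegative matrix entries, entrywise domination implies domination of operator norms: $0 \le A_k \le A^k$ entrywise $\Rightarrow \|A_k\| \le \|A^k\|$. Therefore
\[
|S^k|\,\rho(\Gamma, S^k) = \|A_k\| \le \|A^k\| = \|A\|^k = \bigl(|S|\,\rho(\Gamma,S)\bigr)^k,
\]
which rearranges to the claimed inequality $\rho(\Gamma, S^k) \le \frac{|S|^k}{|S^k|}\,(\rho(\Gamma,S))^k$.

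The main obstacle I anticipate is the parity issue: if $e \notin S$, then words of length exactly $k$ only reach elements of $S^k$ having the "right parity," so it is not literally true that $\beta(g) \ge 1$ for all $g \in S^k$. The clean fix is to run the argument with $S \cup \{e\}$ in place of $S$ (so that $S^k$ as a set is unchanged but every element is now reached by a length-$k$ word via padding with $e$'s), and then note that adjoining the identity only inflates $|S|$ by one and scales $A$ by adding $\lambda(e) = I$, which does not decrease $\rho$; alternatively, and more in the spirit of the paper, one can cite the same bookkeeping with the multiplicities $\alpha(g)$ used in Theorem \ref{spec}. Everything else — expanding the power, nonnegativity of kernels, and the entrywise-domination-implies-norm-domination fact — is routine.
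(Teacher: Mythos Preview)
Your argument is correct and follows the same overall strategy as the paper: both compare $A_k=\sum_{g\in S^k}\lambda(g)$ with $A^k=\bigl(\sum_{s\in S}\lambda(s)\bigr)^k=\sum_{g\in S^k}\beta_g\lambda(g)$ and use positivity to deduce $\|A_k\|\le\|A^k\|=\|A\|^k$. The only difference is in how that positivity step is justified. The paper invokes the spectral--radius formula $\|A\|=\lim_{p}\tau\bigl((A^*A)^p\bigr)^{1/2p}$ for the canonical trace $\tau$ on $\mathbb C[\Gamma]$, and observes that increasing nonnegative coefficients only increases the trace of each power; you instead use the elementary fact that entrywise domination of nonnegative kernels on $\ell^2(\Gamma)$ implies norm domination. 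Both are clean; yours is perhaps the more transparent of the two and does not depend on the tracial structure.

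One remark: your parity worry is unnecessary at the level of this lemma. By definition $S^k=S\cdot S\cdots S$ is exactly the set of length-$k$ products, so $\beta(g)\ge1$ for every $g\in S^k$ automatically, with no need to adjoin $e$ or invoke the $\alpha(g)$ bookkeeping from Theorem~\ref{spec}. The conflation of $S^k$ with the ball $B_k$ that you are guarding against does occur elsewhere in the paper (in the proof of Corollary~\ref{<1}), and there it is indeed handled by tacitly allowing $e\in S$; but the lemma itself is insensitive to this.
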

\begin{proof}
Let $X$ be a finite subset of $\Gamma$ and $A=\sum\limits_{g\in X}\beta_g \lambda(g)\in \mathbb{C}[\Gamma]$, then we have
\begin{align*}
\|A\| &=\lim_{p\rightarrow \infty} \tau((A^*A)^{p})^{\frac{1}{2p}},
\end{align*}
where $\tau$ is the standard trace on $\mathbb{C}[\Gamma]$, i.e. $\tau$ is a linear functional such that $\tau(g)=0$ if $g\neq 1$ and $\tau(1)=1$.
Therefore for every $\beta_g\in \mathbb{N}$, in particular if $\beta_g$ is the multiplicity $\alpha(g)$ of the element $g$ in the multiset $S^{(k)}$, we have
\begin{align*}||\sum\limits_{g\in S^k} \lambda(g)||\leq &||\sum\limits_{g\in S^k}\beta_g \lambda(g)||\\
=& ||\sum\limits_{g\in S^{(k)}}\lambda(g)||.
\end{align*}
Therefore we have
\begin{align*}
\rho(\Gamma, S^k)\leq& \frac{\rho(\Gamma,S^{(k)}) |S|^k}{|S^k|}\\
\leq& \frac{(\rho(\Gamma,S) |S|)^k}{|S^k|}.\\
\end{align*}
\end{proof}

\begin{corollary}\label{<1} Let $\Gamma$ be a non-ameanble group and $S$ a finite generating set in $\Gamma$, and let $gr(\Gamma,S)$ denote the rate of exponential growth of $\Gamma$ with respect to $S$. Assume that
$$\frac{\rho(\Gamma,S) |S|}{gr(\Gamma,S)} < 1 .$$
Then $\Gamma$ has  infinitesimally small spectral radius with respect to the family $\{S^k\}_k$ of generating sets: 
$$\rho(\Gamma,S^k)\rightarrow 0 \text{ when } k\rightarrow\infty .$$
\end{corollary}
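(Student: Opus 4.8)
The plan is to combine Lemma~\ref{power} with the fact that $|S^k|$ grows like $gr(\Gamma,S)^k$. Lemma~\ref{power} already gives
$$\rho(\Gamma,S^k)\ \le\ \frac{(\rho(\Gamma,S)\,|S|)^k}{|S^k|}\ =\ \left(\frac{\rho(\Gamma,S)\,|S|}{|S^k|^{1/k}}\right)^{k},$$
so the whole statement reduces to showing that $|S^k|^{1/k}$ is eventually bounded below by some constant strictly larger than $\rho(\Gamma,S)\,|S|$.

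First I would establish that $|S^k|^{1/k}\to gr(\Gamma,S)$ as $k\to\infty$. The upper bound is immediate from $S^k\subseteq B_k(\Gamma,S)$, which gives $\limsup_k |S^k|^{1/k}\le gr(\Gamma,S)$. For the lower bound I would use that $S$ is symmetric: then $e\in S^2$, hence $S^k\supseteq S^{k-2}$ for all $k\ge 2$, and right multiplication by a fixed $s\in S$ injects $S^{k-1}$ into $S^k$, so $|S^{k-1}|\le|S^k|$. Splitting $B_{2k}=\bigcup_{j\le 2k}S^j$ into even and odd indices and using these monotonicities gives $B_{2k}=S^{2k}\cup S^{2k-1}$, whence $\tfrac12|B_{2k}|\le|S^{2k}|\le|B_{2k}|$; the analogous bound $\tfrac12|B_{2k+1}|\le|S^{2k+1}|\le|B_{2k+1}|$ handles odd exponents. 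Taking $2k$-th (resp. $(2k{+}1)$-th) roots and letting $k\to\infty$ shows that both subsequences of $|S^k|^{1/k}$ converge to $gr(\Gamma,S)$, so the full limit exists and equals $gr(\Gamma,S)$.

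Finally, since $\Gamma$ is non-amenable, $gr(\Gamma,S)>1$, and the hypothesis reads $\rho(\Gamma,S)\,|S|<gr(\Gamma,S)$. Fixing $\delta$ with $\rho(\Gamma,S)\,|S|<\delta<gr(\Gamma,S)$, for all large $k$ we have $|S^k|^{1/k}>\delta$, so the displayed bound yields $\rho(\Gamma,S^k)\le\big(\rho(\Gamma,S)\,|S|/\delta\big)^k$, which tends to $0$ since $\rho(\Gamma,S)\,|S|/\delta<1$.

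The only nonroutine step is the lower bound $\liminf_k|S^k|^{1/k}\ge gr(\Gamma,S)$: one cannot simply assert $B_k\subseteq S^k$ because of a parity obstruction (an element at odd distance from $e$ in the Cayley graph need not be expressible as a product of an even number of generators of $S$), which is exactly why $B_{2k}$ and $B_{2k+1}$ must be analyzed according to the parity of the word length. Everything else is a direct substitution into Lemma~\ref{power}.
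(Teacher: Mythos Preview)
Your proof is correct and follows the same approach as the paper: apply Lemma~\ref{power} and then show that $|S^k|^{1/k}$ is eventually bounded below by a constant strictly larger than $\rho(\Gamma,S)\,|S|$.

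The only difference is in how the relation between $|S^k|$ and the ball volumes is handled. The paper's proof simply asserts that $|S^k|$ equals $|B_k(\Gamma,S)|$ and concludes immediately; this presupposes $e\in S$ (otherwise the equality fails for exactly the parity reason you identify). Your argument is more careful: you do not assume $e\in S$ and instead prove directly that $|S^k|^{1/k}\to gr(\Gamma,S)$ by exploiting $e\in S^2$ (from symmetry of $S$) and the monotonicity $|S^{k-1}|\le |S^k|$ to obtain $\tfrac12|B_k|\le|S^k|\le|B_k|$. The payoff is that your version works verbatim whether or not $e\in S$, at the cost of an extra paragraph; the paper's version is shorter but tacitly relies on $e\in S$ to make the identification $S^k=B_k$ literal.
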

\begin{proof}
Consider the generating sets $\{S^k\}_k$. The cardinality $|S^k|$ of $S^k$ is equal to the cardinality of the $k$-th ball in the Cayley graph of $\Gamma$ with respect to $S$, therefore the assumption implies that there exists  $k_0\in \mathbb{N}$ and $C<1$ such that for every $k\geq k_0$  we have $$\frac{\rho(\Gamma,S) |S|}{\sqrt[k]{|S^k|}}\leq C<1.$$
Therefore, by Lemma \ref{power}, $\rho(\Gamma, S^k)\rightarrow 0$, when $k\rightarrow\infty$.
\end{proof}

\section{Asymptotics of spectral radius and conductance constant along balls}\label{balls}

In this section we study the asymptotic behavior of the spectral radius and of the conductance constant on Cayley graphs with respect to the generating sets  
$S^k$, $k\geq 1$, for any $S$. Since both constants are related by inequalities (1) and (2), in our case it is sufficient to study only one of them.
\begin{theorem}\label{balls_cond}
Let $\Gamma$ be non-amenable group generated by a finite set $S$. Then there is a constant $0<C<1$ such that for every $k\in\mathbb{N}$ we have
$$\rho(\Gamma, S^k)\leq C.$$
In terms of conductance constant, there exists some $0<C'<1$ such that
$$h(\Gamma,S^k)\geq C'$$
for every $k\in\mathbb{N}$.
\end{theorem}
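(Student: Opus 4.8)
The plan is to prove the statement in its conductance form: by the right-hand inequality of (2), a uniform lower bound $h(\Gamma,S^k)\geq C'>0$ immediately gives $\rho(\Gamma,S^k)\leq\sqrt{1-C'^2}<1$. Unwinding the definitions (and using, as in the proof of Corollary \ref{<1}, that $S^k$ is the ball $B_k=B_k(\Gamma,S)$), the bound $h(\Gamma,S^k)\geq C'$ says exactly that $\sum_{g\in B_k}|gF\setminus F|\geq C'\,|B_k|\,|F|$ for every finite $F\subset\Gamma$ and every $k$. I would first note why the ``obvious'' route fails: Lemma \ref{power} combined with $|B_k|\geq gr(\Gamma,S)^k$ (submultiplicativity of $\log|B_k|$) only yields $\rho(\Gamma,S^k)\leq\big(|S|\rho(\Gamma,S)/gr(\Gamma,S)\big)^k$, which is bounded precisely under the hypothesis of Corollary \ref{<1}; for a general non-amenable $\Gamma$ the quantity $|S|\rho(\Gamma,S)$ may exceed $gr(\Gamma,S)$ (already for $F_2$ with its standard generators), so a different argument is needed.

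The idea is to factor out a single generator-step. Put $\bar S=S\cup\{e\}$, let $\bar M=\frac1{|\bar S|}\sum_{s\in\bar S}\lambda(s)$, and set $\bar\rho:=\|\bar M\|=\rho(\Gamma,\bar S)<1$ by Kesten's characterization of amenability. One has $B_k=\bar S\cdot B_{k-1}$ (one inclusion is the triangle inequality; the other comes from removing the first letter of a geodesic word), every $g\in B_k$ admits at most $|\bar S|$ factorizations $g=sw$ with $s\in\bar S$, $w\in B_{k-1}$, and $|B_k|\leq|\bar S|\,|B_{k-1}|$. Fix a finite $F$. For each $w\in B_{k-1}$ the set $wF$ has the same cardinality as $F$, so applying $\bar M$ to $\mathbf 1_{wF}$ and using $\|\bar M\|=\bar\rho$ gives
$$\sum_{s\in\bar S}|s\,(wF)\cap F|\ =\ |\bar S|\,\langle \bar M\,\mathbf 1_{wF},\,\mathbf 1_F\rangle\ \leq\ |\bar S|\,\bar\rho\,|F|,$$
hence $\sum_{s\in\bar S}|s(wF)\setminus F|\geq(1-\bar\rho)\,|\bar S|\,|F|$. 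This is the only point where non-amenability is used.

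Summing this over $w\in B_{k-1}$ and using that $(s,w)\mapsto sw$ maps $\bar S\times B_{k-1}$ onto $B_k$ with fibres of size at most $|\bar S|$, one gets
$$|\bar S|\sum_{g\in B_k}|gF\setminus F|\ \geq\ \sum_{(s,w)\in\bar S\times B_{k-1}}|sw\,F\setminus F|\ \geq\ (1-\bar\rho)\,|\bar S|\,|F|\,|B_{k-1}|,$$
so $\sum_{g\in B_k}|gF\setminus F|\geq(1-\bar\rho)\,|F|\,|B_{k-1}|\geq\frac{1-\bar\rho}{|\bar S|}\,|F|\,|B_k|$, the last step again by $|B_k|\leq|\bar S|\,|B_{k-1}|$. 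Thus $h(\Gamma,S^k)\geq C':=(1-\bar\rho)/|\bar S|$ for all $k$, and (2) gives $\rho(\Gamma,S^k)\leq\sqrt{1-C'^2}=:C<1$ with the corresponding $C'<h(\Gamma,S^k)$. I expect the main subtlety to be the bookkeeping with multiplicities rather than the non-amenability input: estimating $\sum_{g\in B_k}|gF\cap F|$ directly by passing through $\bar S\times B_{k-1}$ overcounts by a factor up to $|\bar S|$ and yields nothing, whereas the same overcount is harmless once one works with the complementary quantity $|gF\setminus F|$ and divides by $|\bar S|$.
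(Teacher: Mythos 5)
Your argument is correct, and it takes a genuinely different route from the paper. The paper proves the theorem by contradiction in the $\ell^1$ setting: assuming $h(\Gamma,S^k)$ gets small along a subsequence, it takes a near-F\o lner set $F$ for the generating set $B_k=S^k$, averages its translates into $f=\sum_{g\in B_k}\chi_{gF}$, and shows via the sphere decomposition $hB_k\,\triangle\, B_k\subseteq \Lambda_k\cup\Lambda_{k+1}$ that $f$ is almost invariant under each $h\in S$ (up to $2\varepsilon|S|\|f\|_1$), contradicting non-amenability; in other words it transfers near-invariance \emph{downward} from $S^k$ to $S$, and the resulting bound is non-quantitative. You instead transfer expansion \emph{upward}: factoring $B_k=\bar S\cdot B_{k-1}$ with $\bar S=S\cup\{e\}$, applying the operator-norm (Kesten) bound $\langle\bar M\mathbf 1_{wF},\mathbf 1_F\rangle\leq\bar\rho|F|$ to each translate $wF$, and controlling the fibre multiplicity by $|\bar S|$, which yields the explicit uniform constant $h(\Gamma,S^k)\geq(1-\bar\rho)/(|\bar S|)$ and hence $\rho(\Gamma,S^k)\leq\sqrt{1-C'^2}$ via Mohar's inequality (2). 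Your bookkeeping is sound: the map $(s,w)\mapsto sw$ is onto $B_k$ with fibres of size at most $|\bar S|$, passing to the complementary quantity $|gF\setminus F|$ makes the overcount harmless, and $\bar\rho<1$ does follow from non-amenability since $\|\bar M\|\leq(1+|S|\rho(\Gamma,S))/(|S|+1)$ with $S$ symmetric. The identification of $S^k$ with the ball $B_k$ (and the harmless presence of $e$, contributing zero boundary terms) is a convention the paper itself adopts, so it is not a gap on your side. The trade-off: the paper's proof is softer and needs only the F\o lner/Reiter characterization in $\ell^1$, while yours is direct, quantitative, and exhibits the constant $C'=(1-\rho(\Gamma,S\cup\{e\}))/(|S|+1)$ explicitly in terms of the one-step spectral radius.
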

\begin{proof}
To reach a contradiction, assume that $\rho(\Gamma, S^k)\rightarrow 1$ on some subsequence, then  $h(\Gamma,S^k)\rightarrow 0$.  In terms of conductance constant this means that for every $\varepsilon>0$ there exists $k\in\mathbb{N}$ and a finite set $F\subseteq \Gamma$ such that
$$\sum\limits_{s\in B_{k}}|sF\backslash F|\leq\varepsilon |B_k||F|,$$ where, as above, $B_k$ denotes the ball of radius $k$ in the Cayley graph of $(\Gamma,S)$, i.e., $B_k=S^k$. 
Fix $\varepsilon$ and let $F$ and $k$ are given by inequality above. Consider the function $f=\sum\limits_{g\in B_k}\chi_{gF}$ and let $\Lambda_k$ be the sphere of radius $k$.
Then $\|f\|_{l_1(\Gamma)}=|F| |B_k|$. Note that for every $h\in S$ we have $|hB_k\backslash B_k|=|B_k\backslash hB_k|$ and $hB_k\backslash B_k\subseteq \Lambda_{k+1}$ and $B_k\backslash hB_k\subseteq \Lambda_k$. Thus for $h\in S$ we have 
\begin{align*}
\|\sum\limits_{g\in B_k}\chi_{hgF}-\sum\limits_{g\in B_k}\chi_{gF}\|_{1}&=\|\sum\limits_{g\in hB_k}\chi_{gF}-\sum\limits_{g\in B_k}\chi_{gF}\|_{1}\\
&= \|\sum\limits_{g\in hB_k\backslash B_k}\chi_{gF}-\sum\limits_{g\in B_k\backslash hB_k}\chi_{gF}\|_{1}\\
&=  \|\sum\limits_{g\in hB_k\backslash B_k}(\chi_{gF}-\chi_F)-\sum\limits_{g\in B_k\backslash hB_k}(\chi_{gF}-\chi_F)\|_{1}\\
&\leq \sum\limits_{g\in \Lambda_{k+1}}\|\chi_{gF}-\chi_F\|_1+\sum\limits_{g\in\Lambda_{k}}\|\chi_{gF}-\chi_F\|_1\\
&=\sum\limits_{g\in \Lambda_{k+1}}|gF\Delta F|+\sum\limits_{g\in\Lambda_{k}}|gF\Delta F|\\
& \leq \sum_{g\in B_{k+1}} |gF\Delta F|\\
& \leq 2\varepsilon |B_{k+1}| |F|\\
&\leq 2\varepsilon \frac{|B_{k+1}|}{|B_k|} \|f\|_1\\
&\leq 2\varepsilon |S|  \|f\|_1.
\end{align*}
Normalizing $f$ we obtain a positive function $f\in l_1(\Gamma)$ with $\|f\|_1=1$ and $\|hf-f\|\leq \varepsilon'$ for every $h\in S$. Thus $\Gamma$ is amenable.
\end{proof}


\section{Isoperimetric constants of non-amenable groups}\label{iso}

In this section we discuss a characterization of amenability related to the property of having infinitesimally small spectral radius. As an application we provide a new estimate on the isoperimetric constant of the group.\\

We will need the following lemma which is well known (see e.g. Proposition 11.5 in \cite{BHV}), but we include it for completeness.
\begin{lemma}\label{inv}
Let $\pi:\Gamma\rightarrow B(H)$ be a unitary representation of a discrete group $\Gamma$. Suppose that there exists a unit vector $\xi\in H$ such that $\|\pi(g)\xi-\xi\|\leq C<\sqrt{2}$ for every $g\in \Gamma$. Then $\pi$ has an invariant vector.
\end{lemma}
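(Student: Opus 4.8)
The plan is to use the averaging trick via the weak closure of the convex hull of the orbit. First I would consider the nonempty closed convex set $K = \overline{\mathrm{conv}}\{\pi(g)\xi : g \in \Gamma\}$ in $H$, taken in the norm topology (equivalently the weak topology, since $K$ is convex and norm-bounded). Since $H$ is a Hilbert space and $K$ is a nonempty closed convex subset, there is a unique element $\eta \in K$ of minimal norm, the circumcenter of the orbit closure. Because $\pi$ is unitary, for each $g \in \Gamma$ the map $v \mapsto \pi(g)v$ is an isometric affine bijection of $H$ that permutes the orbit $\{\pi(h)\xi\}$ and hence maps $K$ onto itself; by uniqueness of the minimal-norm element, $\pi(g)\eta = \eta$ for all $g$, so $\eta$ is $\Gamma$-invariant.

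The remaining point — and the only real content — is that $\eta \neq 0$, so that after normalizing we get an invariant \emph{unit} vector. Here is where the hypothesis $C < \sqrt{2}$ enters. Every element of $K$ is a limit of convex combinations $\sum_i t_i \pi(g_i)\xi$ with $t_i \geq 0$, $\sum_i t_i = 1$. I would bound $\|\eta\|$ from below by estimating the real part of the inner product $\langle v, \xi\rangle$ for every such $v \in K$: since $\mathrm{Re}\langle \pi(g)\xi, \xi\rangle = 1 - \tfrac12\|\pi(g)\xi - \xi\|^2 \geq 1 - \tfrac12 C^2 > 0$, linearity gives $\mathrm{Re}\langle v, \xi\rangle \geq 1 - \tfrac12 C^2$ for convex combinations, and this passes to the closure $K$ by continuity of the inner product. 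In particular $\mathrm{Re}\langle \eta, \xi\rangle \geq 1 - \tfrac12 C^2 > 0$, whence by Cauchy–Schwarz $\|\eta\| \geq |\langle \eta,\xi\rangle| \geq \mathrm{Re}\langle\eta,\xi\rangle > 0$. Then $\xi' = \eta/\|\eta\|$ is the desired invariant unit vector.

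I do not anticipate a serious obstacle: the argument is the standard Bruhat–Tits–style center-of-mass proof that already appears in the literature (as the paper notes, this is Proposition 11.5 in \cite{BHV}). The one place to be careful is the justification that the norm-closed convex hull is weakly compact — which follows because it is bounded (contained in the ball of radius $\sup_g \|\pi(g)\xi\| = 1$) and closed and convex, hence weakly closed — so that the minimal-norm element exists; alternatively one invokes directly the Hilbert-space fact that a nonempty closed convex set has a unique point of least norm, which needs no compactness at all. I would present the proof in the latter, compactness-free form for brevity, spelling out only the invariance-by-uniqueness step and the lower bound on $\|\eta\|$.
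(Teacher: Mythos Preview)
Your proposal is correct and follows essentially the same argument as the paper: take the closed convex hull of the orbit, pick the unique element of minimal norm, use unitarity and uniqueness to get invariance, and use the lower bound $\mathrm{Re}\langle\pi(g)\xi,\xi\rangle\ge 1-\tfrac12 C^2>0$ (extended to the convex hull and its closure) to see this element is nonzero. The paper presents the steps in a slightly different order but the content is identical.
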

\begin{proof}
Note that 
\begin{align*}
Re(<\pi(g)\xi,\xi>)&= 1- \frac{1}{2} ||\pi(g)\xi-\xi||^2\\
& \geq 1-\frac{C^2}{2}=C'>0.
\end{align*}
Let $V=\overline{conv\{\pi(g)\xi: g\in \Gamma\}}$ then $V$ is $\pi(\Gamma)$-invariant and $$Re(<\theta,\xi>)\geq C' \text{ for every } \theta\in V.$$
Let $\nu\in V$ be the unique element of $V$ that has minimal norm, then $Re(<\nu,\xi>)\geq C'$ and $\nu\neq 0$. Since $\pi$ is a unitary representation, $\nu$ is invariant under the action of $\pi(\Gamma)$.
\end{proof}

\begin{theorem}\label{crit}
A  finitely generated group $\Gamma$ is amenable if and only if there exists a constant $C<2$ such that for every finite set $S\subset \Gamma$ there exists a finite set $F\subset \Gamma$ such that 
$$|sF\Delta F|\leq C |F|,\text{ for every } s\in S.$$ 
\end{theorem}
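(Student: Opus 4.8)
The plan is to prove the two implications separately. The forward direction ($\Rightarrow$) should be the easy one: if $\Gamma$ is amenable, then for \emph{every} finite generating set — indeed every finite set — $S$, amenability gives a Følner sequence, hence for any $\varepsilon>0$ a finite set $F$ with $|sF\Delta F|\leq\varepsilon|F|$ for all $s\in S$. So any constant $C<2$ (in fact any $C>0$) works, uniformly over all finite sets $S$. The only minor point to spell out is that the condition is required to hold for \emph{all} finite sets $S$ with one constant $C$; this is immediate since amenability supplies Følner sets working for arbitrarily large $S$ with arbitrarily small ratio.

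For the converse ($\Leftarrow$), suppose such a constant $C<2$ exists. I would argue by contradiction, or rather directly produce an invariant mean / invariant vector. Fix an arbitrary finite symmetric set $S$; by hypothesis there is a finite $F=F_S$ with $|sF\Delta F|\leq C|F|$ for all $s\in S$. Consider the unit vector $\xi_S=\chi_F/\sqrt{|F|}\in\ell^2(\Gamma)$. For $s\in S$,
\begin{equation*}
\|\lambda(s)\xi_S-\xi_S\|^2=\frac{|sF\Delta F|}{|F|}\leq C,
\end{equation*}
so $\|\lambda(s)\xi_S-\xi_S\|\leq\sqrt{C}$. Crucially $\sqrt{C}<\sqrt{2}$. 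Now exhaust $\Gamma$ by an increasing sequence of finite symmetric sets $S_1\subset S_2\subset\cdots$ with $\bigcup_n S_n=\Gamma$, obtain vectors $\xi_{S_n}$ as above, and take a weak-$*$ limit (in the unit ball of $\ell^\infty(\Gamma)$, via the functionals $g\mapsto\langle\lambda(g)\xi_{S_n},\xi_{S_n}\rangle$) to produce a state $\varphi$ on $\ell^\infty(\Gamma)$ — or, more cleanly, apply Lemma~\ref{inv} directly to the representation $\lambda$ restricted to a fixed finite set, but iterated. The cleanest route: fix any \emph{single} generating set $S_0$; the hypothesis applied to larger and larger $S\supseteq S_0$ produces, by Lemma~\ref{inv} applied on the finitely generated subgroups $\langle S\rangle=\Gamma$, an almost-invariant vector for the left regular representation with uniform almost-invariance constant $\sqrt{C}<\sqrt{2}$ over all of a Følner-type exhaustion. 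One then checks that having such vectors for an exhausting family gives, in the limit, either an invariant vector in $\ell^2$ (impossible for infinite $\Gamma$) — no: the correct conclusion is that $\lambda$ weakly contains the trivial representation, which by Kesten's criterion (equivalently by the Hulanicki–Reiter characterization) is equivalent to amenability of $\Gamma$.

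Let me make the last step precise, since it is the main obstacle. From the sets $F_{S_n}$ I get $f_n=\chi_{F_{S_n}}\in\ell^1(\Gamma)$ with $\|f_n\|_1=|F_{S_n}|$ and $\|s f_n-f_n\|_1=|sF_{S_n}\Delta F_{S_n}|\leq C|F_{S_n}|=C\|f_n\|_1$ for $s\in S_n$. This gives $\|s \tilde f_n-\tilde f_n\|_1\leq C$ for the normalized $\tilde f_n$, for all $s\in S_n$. This is \emph{not} a Følner condition because $C<2$ need not be small — a set $F$ with $|sF\Delta F|\leq 1.9|F|$ is far from invariant. So I cannot conclude amenability directly in $\ell^1$. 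Instead I pass to $\ell^2$ via $\xi_n=\chi_{F_{S_n}}/\|\chi_{F_{S_n}}\|_2$, getting $\|\lambda(s)\xi_n-\xi_n\|_2\leq\sqrt{C}<\sqrt 2$ for $s\in S_n$. Now for each fixed $g\in\Gamma$, eventually $g\in S_n$, so $\limsup_n\|\lambda(g)\xi_n-\xi_n\|_2\leq\sqrt C$. Thus the regular representation $\lambda$ of $\Gamma$ admits a net of unit vectors $(\xi_n)$ with $\|\lambda(g)\xi_n-\xi_n\|\leq\sqrt C<\sqrt 2$ for every $g\in\Gamma$ eventually. The issue is that this is a net-wise bound, not a genuine almost-invariant vector; but applying Lemma~\ref{inv}-type reasoning in the appropriate limit: the functions $\psi_n(g)=\langle\lambda(g)\xi_n,\xi_n\rangle$ are positive-definite, $\psi_n(1)=1$, and $\mathrm{Re}\,\psi_n(g)\geq 1-C/2>0$ for $g\in S_n$; a weak-$*$ cluster point $\psi\in\ell^\infty(\Gamma)$ is positive-definite with $\psi(1)=1$ and $\mathrm{Re}\,\psi(g)\geq 1-C/2=:C'>0$ for \emph{all} $g\in\Gamma$. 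By the GNS construction $\psi$ comes from a unitary representation $\pi$ with a cyclic unit vector $\eta$, $\langle\pi(g)\eta,\eta\rangle=\psi(g)$, so $\|\pi(g)\eta-\eta\|^2=2-2\mathrm{Re}\,\psi(g)\leq 2-2C'<2$ for all $g\in\Gamma$; by Lemma~\ref{inv}, $\pi$ has an invariant vector, hence $\psi'(g):=1$ is a limit of the type... — more directly, since $\pi$ is weakly contained in $\lambda$ (each $\psi_n$ is a coefficient of $\lambda$, and weak containment passes to weak-$*$ limits), $\lambda$ weakly contains a representation with invariant vectors, i.e. weakly contains the trivial representation, so $\Gamma$ is amenable by Hulanicki's theorem. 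I would present this limiting argument carefully, as it is where the subtlety ($C<2$ rather than $C$ small) actually gets used, via Lemma~\ref{inv}.
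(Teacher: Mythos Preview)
Your proof is correct and follows essentially the same strategy as the paper: pass to $\ell^2$ via $\xi_F = |F|^{-1/2}\chi_F$, observe $\|\lambda(s)\xi_F-\xi_F\|\le\sqrt{C}<\sqrt{2}$, exhaust $\Gamma$ by finite sets $S_n$, take a limit, and invoke Lemma~\ref{inv} to produce an invariant vector and hence amenability. The only difference is the packaging of the limit step: the paper forms the ultrapower representation $\lambda_\omega$ on $\ell_2(\Gamma)^\omega$ and applies Lemma~\ref{inv} directly to the vector $(\xi_{F_i})_i$ there, whereas you take a weak-$*$ cluster point of the positive-definite coefficients $\psi_n(g)=\langle\lambda(g)\xi_n,\xi_n\rangle$, run GNS, and appeal to weak containment; these are standard equivalent formalisms, and the paper's version lets you skip the explicit GNS/weak-containment bookkeeping.
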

\begin{proof}
The existence of $C\leq 2$ that satisfy the condition of the theorem for amenable group $\Gamma$ follows from F\o lner's criteria. 

To prove the converse fix a finite set $S$ and let $F$ be a finite set of $\Gamma$ such that 
$$|sF\Delta F|\leq C |F|,\text{ for every } s\in S.$$ 

Consider $\xi_F=\frac{1}{\sqrt{|F|}}\chi_{F}$, we have $||\lambda(s)\xi_F-\xi_{F}||\leq \sqrt{C}$ for every $s\in S$. Let  $S_i$ be  an increasing sequence of sets in $\Gamma$ with $\Gamma=\cup S_i$ and let  $\lambda_{\omega}:G\rightarrow B(l_2(\Gamma)^{\omega})$ be an ultra-limit of the left-regular representation acting on an ultra power of the Hilbert space $l_2(\Gamma)$. Then for the vector $\xi=(\xi_{F_i})_{i\in\mathbb{N}}$ we have that $||\lambda_{\omega}(g)\xi-\xi||\leq \sqrt{C}$ for every $g\in G$. By Lemma \ref{inv} we have that $\lambda_{\omega}$ has an invariant vector. Thus $\lambda$ has a sequence of almost invariant vectors, therefore $\Gamma$ is amenable. 
\end{proof}

As a direct application of the Theorem \ref{crit} we have the following corollary.

\begin{corollary}
Let $\Gamma$ be a non-amenable group then for every $\varepsilon>0$ there exists a finite set $S\subset \Gamma$ such that 
$$\phi(\Gamma,S)\geq 1-\varepsilon.$$
\end{corollary}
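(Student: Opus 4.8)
The plan is to derive the corollary directly from Theorem \ref{crit} by contraposition. Suppose, for contradiction, that there exists some $\varepsilon_0>0$ such that for \emph{every} finite generating set $S\subset\Gamma$ we have $\phi(\Gamma,S)<1-\varepsilon_0$. I want to translate this hypothesis into the verifiable condition in Theorem \ref{crit}, namely the existence of a uniform constant $C<2$ controlling the symmetric-difference ratio $|sF\Delta F|/|F|$ for all $s\in S$.

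First I would recall the relation between the edge-boundary and the symmetric difference. For a finite $F\subset\Gamma$ and $s\in S$, one has $|sF\setminus F| = |F\setminus sF|$ (left multiplication by $s$ is a bijection of $\Gamma$), hence $|sF\Delta F| = 2|sF\setminus F|$. Therefore $\sum_{s\in S}|sF\setminus F| = |\partial_E F|$ gives, for a \emph{fixed} $s$, only the bound $|sF\setminus F|\le |\partial_E F|$, which is far too weak. The correct move is instead: if $\phi(\Gamma,S)<1-\varepsilon_0$ then by definition of the infimum there is a finite $F$ with $\sum_{s\in S}|sF\setminus F|\le (1-\varepsilon_0)|F|$, and in particular $|sF\setminus F|\le (1-\varepsilon_0)|F|$ for each individual $s\in S$, so $|sF\Delta F| = 2|sF\setminus F|\le 2(1-\varepsilon_0)|F|$. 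Thus the constant $C := 2(1-\varepsilon_0)<2$ works uniformly over all finite sets $S$: for each such $S$ the required $F$ exists. (Note that Theorem \ref{crit} is stated for arbitrary finite $S$, not only generating sets, but the infimum in $\phi(\Gamma,S)$ over a non-generating $S$ only makes the ratio smaller, so the bound persists; alternatively one enlarges $S$ to a generating set, which only increases $\sum_{s\in S}|sF\setminus F|$ by controlled amounts and one absorbs this — but the cleanest route is simply that the conclusion of Theorem \ref{crit} needs the bound for a single $F$ per $S$, and we have it.)

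Then I would invoke Theorem \ref{crit}: the uniform constant $C=2(1-\varepsilon_0)<2$ together with the $F$'s just produced means $\Gamma$ is amenable, contradicting the hypothesis. Hence no such $\varepsilon_0$ exists, i.e.\ for every $\varepsilon>0$ there is a finite $S$ with $\phi(\Gamma,S)\ge 1-\varepsilon$, which is exactly the claim.

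The only delicate point — and the one I would state carefully rather than wave at — is the quantifier match between $\phi(\Gamma,S)$ (an infimum over all finite $F$, giving the \emph{sum} over $s\in S$ small) and the hypothesis of Theorem \ref{crit} (which asks for one $F$ making \emph{each} $|sF\Delta F|$ small). Passing from ``sum small'' to ``each term small'' is immediate since all terms are nonnegative, and the factor $2$ from $|sF\Delta F|=2|sF\setminus F|$ keeps the constant strictly below $2$. I expect this bookkeeping to be the entire content of the proof; there is no analytic obstacle.
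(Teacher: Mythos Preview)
Your argument is correct and is exactly the ``direct application of Theorem~\ref{crit}'' the paper has in mind: negate the conclusion, use $\sum_{s\in S}|sF\setminus F|<(1-\varepsilon_0)|F|$ to bound each $|sF\Delta F|=2|sF\setminus F|$ by $C|F|$ with $C=2(1-\varepsilon_0)<2$, and invoke Theorem~\ref{crit}. The detour through generating sets is unnecessary---the corollary speaks of arbitrary finite $S$, so the contrapositive already quantifies over all finite $S$ and matches the hypothesis of Theorem~\ref{crit} directly.
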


\begin{remark} If the condition equivalent to amenability in Theorem \ref{crit} could be strengthen to 
say that there exists a constant $C<2$ such that for every finite set $S\subset \Gamma$ there exists a finite set $F\subset \Gamma$ such that 
$$\Sigma_{s\in S} |sF\Delta F|\leq C |F| |S|,$$
then it would imply that for every non-amenable group the conductance constant is arbitrary close to $1$ on some finite sets and thus the group has infinitesimally small spectral radius, by Mohar's inequalities.
 \end{remark}


\begin{thebibliography}{99}

\bibitem{Ad}\textsc{S. I. Adyan,} {\it Random walks on free periodic groups.} \textrm{Izv. Akad. Nauk SSSR Ser. Mat. 46 (1982), 1139Ð1149, 1343.}

\bibitem{BHV}\textsc{B. Bekka, P. de la Harpe, A. Valette,}  Kazhdan Property (T). Cambridge University Press, 2008.

\bibitem{BNP}\textsc{I. Benjamini, A. Nachmias, Y. Peres,} {\it Is the critical percolation probability local?}  \textrm{Probab. Theory Related Fields 149 (2011), no. 1-2, 261–-269.}

\bibitem{BS}\textsc{I. Benjamini, O. Schramm,} {\it Percolation beyond $\mathbb{Z}^d$, many questions and a few answers. } \textrm{Electron. Comm. Probab. 1 (1996), no. 8, 71--82} 

\bibitem{BS2}\textsc{I. Benjamini, O. Schramm,} {\it Percolation in the hyperbolic plane.} \textrm{ J. Amer. Math. Soc. 14 (2001), no. 2, 487--507}

\bibitem{BK} \textsc{R. Burton, M. Keane}, {\it Density and uniqueness in percolation.} \textrm{Comm. Math. Phys. 121 (1989), 501--505}

\bibitem{Dodziuk}\textsc{J. Dodziuk,} {\it Difference equations, isoperimetric inequality and transience of certain random walks.}  \textrm{Trans. Amer. Math. Soc. 284 (1984), no. 2, 787--794.}

\bibitem{Fol}\textsc{E. F\o lner,} {\it On groups with full Banach mean value.} \textrm{Math. Scand. 3 (1955),  243--254.}

\bibitem{Gb}\textsc{D. Gaboriau,} {\it Invariant percolation and harmonic Dirichlet functions.} \textrm{Geom. Funct. Anal. 15 (2005), no. 5, 1004–-1051.}


\bibitem{HP} \textsc{O. H\"aggstr\"om,  Y. Peres,} {\it Monotonicity of uniqueness for percolation on Cayley graphs: All
    infinite clusters are born simultaneously.} \textrm{Probab. Th. Rel. Fields, 113 (1999), 273--285.}

\bibitem{Jol} \textsc{P. Jolissaint,} {\it Rapidly decreasing functions in reduced $C^*$-algebras of groups.} \textrm{Trans. Amer. Math. Soc. 317 (1990), 167--196.}

\bibitem{Kesten}\textsc{H. Kesten,} {\it Symmetric random walks on groups.} \textrm{Trans. Amer. Math. Soc. 92 (1959), 336--354.}

\bibitem{Ly95} \textsc{R. Lyons,} {\it Random walks and the growth of groups.} \textrm{C. R. Acad. Sci. Paris S´er. I Math. 320,
1361–1366.}

\bibitem{Ly00} \textsc{R. Lyons,} {\it Phase transitions on nonamenable graphs.} \textrm{J. Math. Phys. 41 (2000), 1099-1126.}

\bibitem{Ly12}\textsc{R. Lyons,} {\it Fixed Price of Groups and Percolation.} \textrm{arXiv:1109.5418, to appear in Erg. Th. Dyn. Syst.}

\bibitem{Mohar}\textsc{B. Mohar,} {\it Isoperimetric inequalities, growth, and the spectrum of graphs.} \textrm{Linear Algebra Appl. 103 (1988), 119--131.}

\bibitem{PS}\textsc{I. Pak, T. Smirnova-Nagnibeda,} {\it On non-uniqueness of percolation on non-amenable Cayley graphs.} \textrm{C. R. Acad. Sci. Paris S\'er. I Math. 330 (2000), no. 6, 495--500.}

\bibitem{Sir}\textsc{V. L. Sirvanjan,} {\it Imbedding of the group $B(\infty,n)$ in the group $B(2,n)$. Izv. Akad. Nauk SSSR Ser. Mat. 40 (1976), 190--208, 223.}

\bibitem{Thom}\textsc{A. Thom,} {\it A remark about the spectral radius.} ArXiv:1306.1767

\end{thebibliography}
\end{document}